\documentclass[a4paper,reqno,oneside]{amsart}

\usepackage{amsmath,amsfonts,amssymb,amsthm}
\usepackage{graphicx}
\usepackage{color}
\usepackage[backref,colorlinks]{hyperref}
\usepackage{enumitem}
\usepackage{verbatim}

  
 

\setitemize{leftmargin=\parindent}
\setenumerate{leftmargin=*}
 
\def\itm#1{\rm ({#1})} 
\def\itmit#1{\itm{\it #1\,}} 
 
\def\abc{\itmit{\alph{*}}}


 

\newcommand{\cF}{\mathcal{F}}

\newcommand{\link}{\mathrm{link}}

\newtheorem{theorem}{Theorem}
\newtheorem{lemma}[theorem]{Lemma}
\newtheorem{claim}[theorem]{Claim}

\newtheorem{fact}[theorem]{Fact}

\theoremstyle{definition}

\theoremstyle{remark}


\renewcommand{\subset}{\subseteq}

\renewcommand{\rho}{\varrho}
\renewcommand{\phi}{\varphi}

\newcommand{\PP}{\mathbb{P}}

\newcommand{\iti}[1]{\ensuremath{\mathrm{ (\textit{#1\,})}}}

\setitemize{leftmargin=\parindent}
\setenumerate{leftmargin=*}

\newcommand{\cH}{\mathcal{H}}
\newcommand{\Exp}{\mathbb{E}}

\newcommand{\cM}{\mathcal{M}}
\newcommand{\Krt}{K^{(r)}_t}
\newcommand{\Ktt}{K^{(3)}_t}

\newcommand{\Km}[1]{K_{#1}^{(3)}}
\newcommand{\red}{\mathrm{red}}
\newcommand{\blue}{\mathrm{blue}}
\newcommand{\girth}{\text{girth}}
\newcommand{\cP}{\mathcal{P}}
\newcommand{\dist}{\mathrm{dist}}
\begin{document}

\title[Minimal Ramsey Hypergraphs]{Minimum degrees and codegrees  of minimal Ramsey $3$-uniform hypergraphs}
\author[Dennis Clemens]{Dennis Clemens}
\address{Technische Universit\"at Hamburg-Harburg, Institut f\"ur Mathematik, 
  Schwarzenberg-str. 95, 21073 Hamburg, Germany}
\email{dennis.clemens@tuhh.de}
\author[Yury Person]{Yury Person}
\address{Goethe-Universit\"at, Institut f\"ur Mathematik,
  Robert-Mayer-Str. 10, 60325 Frankfurt am Main, Germany}
\email{person@math.uni-frankfurt.de}

 \thanks{
    YP is partially supported by DFG grant PE 2299/1-1.
  }

\date{\today}

\begin{abstract}
 A uniform hypergraph $H$ is called $k$-Ramsey for a hypergraph $F$, if no 
matter how one colors the edges of $H$ with $k$ colors, there is always a monochromatic 
copy of $F$. We say that $H$ is minimal $k$-Ramsey for $F$, if $H$ is $k$-Ramsey for $F$ but 
every proper subhypergraph of $H$ is not. Burr, Erd\H{o}s and Lovasz [S.\ A.\ Burr, P.\ 
Erd\H{o}s, and L.\ Lov\'asz, \emph{On graphs of Ramsey type}, Ars Combinatoria 1 (1976),
no. 1, 167--190] studied various parameters of minimal Ramsey graphs. 
In this paper we initiate the  study of 
 minimum degrees and codegrees of minimal Ramsey $3$-uniform hypergraphs. We show that 
the smallest minimum vertex degree over all minimal $k$-Ramsey $3$-uniform hypergraphs for 
$\Ktt$ is exponential in some polynomial in $k$ and $t$. We also study the smallest possible minimum 
codegrees over minimal $2$-Ramsey $3$-uniform hypergraphs.
\end{abstract}

\maketitle

\section{Introduction and New Results}
A graph $G$ is said to be Ramsey for a graph $F$ if no matter how one colors the edges of $G$ with two colors, 
say red and blue, there is a monochromatic copy of $F$ (we write $G\longrightarrow (F)_2$ for this). 
 A classical result of Ramsey~\cite{Ram30} states that 
for every  $F$ there is an integer $n$ such that $K_n$ is Ramsey for $F$. Moreover, generalizations to more than two colors and to hypergraphs hold as well~\cite{Ram30}. 
We say that $G$ is minimal Ramsey for $F$ 
if $G$ is Ramsey for $F$ but every proper subgraph of $G$ is not.  More generally, we denote by $\cM_k(F)$ the set of minimal graphs $G$ with the property that 
no matter how one colors the edges of $G$ with $k$ colors, there is a monochromatic copy of  $F$ in it, and refer to these as minimal $k$-Ramsey graphs for $F$. 
There are many challenging open questions concerning 
 the study of various parameters of minimal $k$-Ramsey graphs for various $F$. The most studied ones are the classical (vertex) Ramsey numbers $r_k(F):=\min_{G\in\cM_k(F)}v(G)$ and 
 the size Ramsey number $\hat{r}_k(F):=\min_{G\in\cM_k(F)}e(G)$, where $v(G)$ is the number of vertices in $G$ and $e(G)$ is its number of edges. 
To determine the classical Ramsey number $r_2(K_t)$ is a notorously difficult problem and essentially
 the best known bounds are $2^{(1+o(1))t/2}$ and $2^{(2+o(1))t}$ due to Spencer~\cite{Spe75} and Conlon~\cite{Con09}.

Burr, Erd\H{o}s and Lov\'asz~\cite{BEL76} were the first to study  other
 possible parameters of the class $\cM_2(K_t)$. In particular they 
determined the minimum degree $s_2(K_t):=\min_{G\in\cM_2(K_t)}\delta(G)=(t-1)^2$ which looks surprising given the exponential bound on the minimum degree of $K_n$ with 
$K_n\longrightarrow (K_t)_2$ and $n=r_2(K_t)$ (it is not difficult to see that such $K_n$ is indeed minimal $2$-Ramsey for $K_t$). 
 Generalizing their results, Fox, Grinshpun, 
Liebenau, Person and Szab\'o~\cite{FGLPS14} studied the minimum degree $s_k(K_t):=\min_{G\in\cM_k(K_t)}\delta(G)$ for more colors showing a general bound on 
$s_k(K_t)\le 8 (t-1)^6 k^3$ and proving quasiquadratic bounds in $k$ on $s_k(K_t)$ for fixed $t$. Further results concerning minimal Ramsey graphs were studied in~\cite{bnr1984,fl2006,rs2008,szz2010,FGLPS14a}.

In this paper we initiate the study of minimal Ramsey 
 $3$-uniform hypergraphs and provide first bounds on various notions of minimum degrees 
for minimal Ramsey hypergraphs.
 Generally, an $r$-uniform hypergraph 
$H$ is a tuple $(V,E)$ with vertex set $V$ and $E\subseteq \binom{V}{r}$ being its edge set. We define  $\link(v)$, the \emph{link} of a vertex 
$v\in V$, to be the edges of $H$ that contain $v$, minus the vertex $v$ (thus, these form an $(r-1)$-uniform hypergraph). 
Formally, the edge set of $\link(v)$ is $\{e\setminus \{v\}:\ v\in e\in E \}$. The random $r$-uniform hypergraph 
$H^{(r)}(n,p)$ is the probability space of all labeled $r$-uniform hypergraphs on 
the vertex set $[n]$ where each edge exists with probability $p$ independently  of the other edges.
 In this paper we will be dealing exclusively with $3$-uniform hypergraphs, thus 
the links of their vertices are just the edges of some graph.

Ramsey's theorem holds for $r$-uniform hypergraphs as well as shown originally by Ramsey himself~\cite{Ram30}, and we 
write $G\longrightarrow (F)_k$, if no matter how one colors the edges of the $r$-uniform hypergraph $G$, there is a monochromatic 
copy of $F$. We denote by $\Krt$ the complete $r$-uniform hypergraph 
 with $t$ vertices, i.e.\  $\Ktt=([t],\binom{[t]}{r})$, and by the hypergraph Ramsey number $r_k(F)$ the smallest $n$ such 
that $K^{(r)}_n\longrightarrow (F)_k$. While in the graph case the known 
bounds on $r_2(K_t)$ are only polynomially far apart, already in the case of $3$-uniform hypergraphs the bounds on $r_2(\Krt)$ differ in one exponent: 
$2^{c_1 t^2}\le r_2(\Ktt)\le 2^{2^{c_2 t}}$ for some absolute positive constants $c_1$ and $c_2$. More generally, it holds 
$t_{r-1}(c_1 t^2)\le r_2(\Krt)\le t_{r}(c_2 t)$ for some absolute constants $c_1=c_1({r}),c_2=c_2({r})>0$ and where $t_i(x)$ is the tower function defined by $t_1(x):=x$, $t_{i}(x):=2^{t_{i-1}(x)}$. 
For further information on hypergraph Ramsey numbers we refer the reader to the standard book on Ramsey theory~\cite{GRS90} and for newer results to the work of 
Conlon, Fox and Sudakov~\cite{CFS2015}.

  Given $\ell\in[r-1]$, we define the degree $\deg(S)$ of an $\ell$-set $S$ in an 
$r$-uniform hypergraph $H=(V,E)$ as the number of edges that contain $S$ and the minimum $\ell$-degree 
$\delta_{\ell}(H):=\min_{S\in\binom{V}{\ell}}\deg(S)$. For two vertices $u$ and $v$ we simply write 
$\deg(u,v)$ for the \emph{codegree} $\deg(\{u,v\})$. 

Similar to the graph case we extend verbatim  the notion of minimal Ramsey graphs to minimal Ramsey $r$-uniform hypergraphs  $\cM_k(F)$  in a natural way. That is,
  $\cM_k(F)$ is the set of all minimal $k$-Ramsey $r$-uniform hypergraphs $H$ with $H\longrightarrow (F)_k$.
We define 
\begin{equation}\label{eq:sklK}
s_{k,\ell}(\Krt):=\min_{G\in\cM_k(\Krt)}\delta_{\ell}(G), 
\end{equation}
which extends the introduced graph parameter $s_2(K_t)$. It will be shown actually that 
$s_{2,2}(\Km{t})$ is zero and thus it makes sense to ask for the second smallest value of the codegrees.
 This motivates the following parameter $s'_{k,\ell}(\Krt)$:
\[
s'_{k,\ell}(\Krt):=\min_{G\in\cM_k(\Krt)}\left(\min \left\{\deg_G(S)\colon S\in\binom{V(G)}{\ell}, \deg_G(S)>0 \right\}\right).  
\]
 
  We prove the following results on 
the minimum degree and codegree of minimal Ramsey $3$-uniform  hypergraphs for cliques $\Km{t}$.

\begin{theorem}\label{thm:mindegree} 
The following holds for all $t\ge 4$ and $k\ge2$ 
\begin{equation}\label{eq:mindeg}
2^{\frac{1}{2}kt(1-o(1))}\le \binom{r_k(K_{t-1})}{2}  \leq s_{k,1}(\Km{t}) \leq k^{20kt^4}. 
\end{equation}
\end{theorem}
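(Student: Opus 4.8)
The plan is to establish the three inequalities of \eqref{eq:mindeg} separately, the rightmost one being the main work. The leftmost inequality concerns only the multicolour \emph{graph} Ramsey number $r_k(K_{t-1})$: iterating the standard product construction for multicolour Ramsey numbers (which turns a $k_1$-colouring of $K_a$ and a $k_2$-colouring of $K_b$ with no monochromatic $K_{t-1}$ into such a $(k_1+k_2)$-colouring of $K_{ab}$), starting from Spencer's lower bound for $r_2(K_{t-1})$~\cite{Spe75}, gives $\binom{r_k(K_{t-1})}{2}\ge 2^{\frac12 kt(1-o(1))}$. So it remains to prove the two inner inequalities.

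For the lower bound $\binom{r_k(K_{t-1})}{2}\le s_{k,1}(\Km{t})$ we adapt the classical argument of Burr--Erd\H{o}s--Lov\'asz~\cite{BEL76}. Fix $H\in\cM_k(\Km{t})$; we show $\deg_H(v)\ge\binom{r_k(K_{t-1})}{2}$ for every $v\in V(H)$. A minimal $k$-Ramsey hypergraph has no isolated vertex, so we may fix an edge $e\ni v$. By minimality $H-e\not\longrightarrow(\Km{t})_k$, and since $H-v$ is a subhypergraph of $H-e$ it follows that $H-v\not\longrightarrow(\Km{t})_k$; fix a $k$-colouring $\psi$ of the triples of $H$ not containing $v$ with no monochromatic $\Km{t}$. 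Suppose, towards a contradiction, that the link graph $\link(v)$ admits a $k$-colouring $c$ of its edges with no monochromatic $K_{t-1}$, and colour $H$ by $\psi$ on triples avoiding $v$ and by $\{v,x,y\}\mapsto c(\{x,y\})$ on triples through $v$. A monochromatic $\Km{t}$ in this colouring either avoids $v$, contradicting the choice of $\psi$, or has vertex set $\{v\}\cup W$ with $|W|=t-1$; in the latter case all triples $\{v,x,y\}$ with $x,y\in W$ share a colour, so $W$ spans a monochromatic $K_{t-1}$ of $\link(v)$ under $c$, again a contradiction. Hence $H\not\longrightarrow(\Km{t})_k$, which is absurd, and therefore $\link(v)\longrightarrow(K_{t-1})_k$ as a $2$-uniform graph. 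It is classical (Chv\'atal) that every graph $L$ with $L\longrightarrow(K_m)_k$ has at least $\binom{r_k(K_m)}{2}$ edges, so $\deg_H(v)=e(\link(v))\ge\binom{r_k(K_{t-1})}{2}$; minimising over $v$ and over $H$ finishes this part.

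For the upper bound we construct a minimal $k$-Ramsey $3$-uniform hypergraph for $\Km{t}$ with a vertex $v_0$ of degree at most $k^{20kt^4}$, following the approach of Fox--Grinshpun--Liebenau--Person--Szab\'o~\cite{FGLPS14} in the graph case. The key ingredient is a supply of $3$-uniform \emph{signal senders} for $\Km{t}$: gadgets with two distinguished triples whose colours are forced to agree (or to form a prescribed pair) in every $k$-colouring avoiding a monochromatic $\Km{t}$, that themselves admit such a colouring, and whose two distinguished triples sit at prescribed distance so that senders can be glued into larger hypergraphs. One then puts into the link of $v_0$ a small $k$-Ramsey graph for $K_{t-1}$, of at most $k^{O(kt)}$ edges (using $r_k(K_{t-1})\le k^{O(kt)}$), and attaches, for each $(t-1)$-subset $U$ of that link graph and each colour $i$, signal senders forcing the colour $i$ on the triples $\{v_0,x,y\}$, $x,y\in U$, and carrying it into external Ramsey gadgets; the effect is that every $k$-colouring of the whole hypergraph either already has a monochromatic $\Km{t}$ through $v_0$ or is pushed by the senders into a monochromatic $\Km{t}$ inside a gadget, so the hypergraph is $k$-Ramsey, while the senders are arranged so that every edge --- in particular every triple at $v_0$ --- lies in some good colouring of the hypergraph with that edge removed, which yields minimality. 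Bounding $\deg_H(v_0)$ by the edges of the core link plus the further triples at $v_0$ introduced over the at most $\binom{r_k(K_{t-1})}{t-1}\cdot k=k^{O(kt^2)}$ configurations, each contributing only $\mathrm{poly}(t)$ many such triples and replicated polynomially often, gives $\deg_H(v_0)\le k^{20kt^4}$.

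The main obstacle is this construction. Producing $3$-uniform signal senders for $\Km{t}$ has no ready-made hypergraph analogue to quote, and --- more delicately --- the gadgetry must be set up so that the finished hypergraph is genuinely \emph{minimal} while $v_0$ retains its small degree: simply taking a minimal $k$-Ramsey subhypergraph of a $k$-Ramsey hypergraph could delete $v_0$ or thin out its link, so the senders have to be engineered to make $v_0$ and each of its incident triples irredundant. Verifying this irredundancy for every edge, together with the lengthy but routine counting that yields the exponent $20kt^4$, is where the real work lies.
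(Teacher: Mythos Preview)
Your lower bound argument is essentially the paper's: show that $\link_H(v)\longrightarrow(K_{t-1})_k$ for a minimum-degree vertex $v$ of any $H\in\cM_k(\Km{t})$, then invoke the size-Ramsey identity $\hat r_k(K_{t-1})=\binom{r_k(K_{t-1})}{2}$. (The paper attributes this identity to Erd\H{o}s--Faudree--Rousseau--Schelp rather than Chv\'atal, but the content is the same.)

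Your upper bound, however, does not work as written, and the gap is not just missing detail. The sentence ``for each $(t-1)$-subset $U$ \dots\ and each colour $i$, signal senders forcing the colour $i$ on the triples $\{v_0,x,y\}$, $x,y\in U$'' is self-contradictory: a fixed triple $\{v_0,x,y\}$ lies in many sets $U$, and even for a single $U$ you would be forcing it to carry all $k$ colours simultaneously. What one actually needs is that, \emph{whatever} colouring the triples through $v_0$ receive (equivalently, whatever $k$-colouring the complete graph on the neighbourhood of $v_0$ receives), one can locate a monochromatic $K_{t-1}$ in the link whose vertex set $S$ also spans a $\Km{t-1}$ of the matching colour in the ambient hypergraph. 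Senders cannot arrange this a posteriori; the ambient hypergraph has to be prepared in advance. The paper does this via a probabilistic lemma (Lemma~\ref{lem:goodgadget}): it builds edge-disjoint $\Km{t}$-free hypergraphs $H_1,\dots,H_k$ on $n=k^{10kt^4}$ vertices such that for \emph{every} $k$-colouring of $E(K_n)$ there is a colour $x$ and, for each $i$, a set $S_i$ spanning a monochromatic $K_{t-1}$ in colour $x$ with $H_i[S_i]\cong\Km{t-1}$. One then uses the BEL gadget (Theorem~\ref{thm:BEL_gadget}) to force $H_i$ into colour $i$, and finally adjoins $v$ with $\link(v)=\binom{[n]}{2}$; the set $S_x$ together with $v$ yields the monochromatic $\Km{t}$. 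This random ``colour-robust packing'' is the genuine idea you are missing, and it is also what drives the exponent $20kt^4$ (since $\deg(v)=\binom{n}{2}$).

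Your concern about minimality is also misplaced, and in fact makes the task look harder than it is. One does \emph{not} need the constructed hypergraph $\cH$ to be minimal, nor does one need every triple at $v_0$ to be irredundant. It suffices that $\cH\setminus\{v_0\}\not\longrightarrow(\Km{t})_k$ while $\cH\longrightarrow(\Km{t})_k$: then every minimal $k$-Ramsey subhypergraph $\cH''\subseteq\cH$ must contain $v_0$, and $\deg_{\cH''}(v_0)\le\deg_{\cH}(v_0)$. The paper's construction gives exactly this, since $\cH\setminus\{v_0\}=\cH'$ is not Ramsey by Theorem~\ref{thm:BEL_gadget}. Thinning of the link in passing to $\cH''$ is harmless --- it only helps.
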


For the lower bound see \cite{CFS2015}.

\begin{theorem}\label{thm:main_result_codeg}
Let $t\ge 4$ be an integer. Then, 
\[
 s_{2,2}(\Km{t})=0\text{  and  }s'_{2,2}(K_t^{(3)})=(t-2)^2.
\]
\end{theorem}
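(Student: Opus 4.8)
The plan is to establish the lower bound $s'_{2,2}(\Km{t})\ge (t-2)^2$ first, and then to produce a single minimal $2$-Ramsey $3$-graph which both realizes codegree $(t-2)^2$ at some pair and contains a pair of codegree $0$. The engine for the lower bound is the classical lemma behind $s_2(K_{t-1})=(t-2)^2$ in~\cite{BEL76}, which I will use in the form: \emph{a graph with a $2$-colouring of its edges in which the vertex set cannot be split into a set containing no red $K_s$ and a set containing no blue $K_s$ has at least $s^2$ vertices.} So let $G\in\cM_2(\Km{t})$, let $\{u,w\}$ be a pair with $d:=\deg_G(u,w)\ge1$, and put $L:=\{y:\{u,w,y\}\in E(G)\}$, so $|L|=d$. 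Fix $y_0\in L$, set $e_0=\{u,w,y_0\}$, and (by minimality) fix a colouring $c$ of $G-e_0$ with no monochromatic $\Km{t}$. For a map $g\colon L\to\{\red,\blue\}$ let $c_g$ be the colouring of $G$ that agrees with $c$ off the edges through $\{u,w\}$ and colours $\{u,w,y\}$ by $g(y)$. Since $G\to(\Km{t})_2$, each $c_g$ produces a monochromatic $\Km{t}$, and since $c$ has none in $G-e_0$ such a copy must contain both $u$ and $w$; so its vertex set is $\{u,w\}\cup Y$ with $Y\subset L$, $|Y|=t-2$, $Y\subset g^{-1}(\chi)$ for its colour $\chi$, and — the point — all triples $\{u,y,y'\}$ with $y,y'\in Y$ receive colour $\chi$ under $c$. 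Interpreting $c$ as a $2$-colouring of the link graph $\link_G(u)$ restricted to $L$, this says that $Y$ is a $\chi$-coloured $K_{t-2}$ there. Hence for \emph{every} partition $L=g^{-1}(\red)\sqcup g^{-1}(\blue)$ one side contains a same-coloured $K_{t-2}$ of $\link_G(u)[L]$, and the cited lemma gives $d=|L|\ge (t-2)^2$.

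For the matching construction I would let $L=[t-2]\times[t-2]$ be the link of a distinguished pair $\{u,w\}$, so the $(t-2)^2$ triples $\{u,w,y\}$ ($y\in L$) are the only edges through $\{u,w\}$ and $\deg(u,w)=(t-2)^2$. To this I add all triples $\{u,y,y'\}$ and $\{w,y,y'\}$, together with the triples inside $L$ whose three vertices lie in three distinct columns (``transversal triples'') and those whose three vertices lie in one common column (``column triples''); finally I attach positive and negative signal senders for $3$-uniform hypergraphs forcing, in every colouring with no monochromatic $\Km{t}$, the colours: $\{u,y,y'\}$ and $\{w,y,y'\}$ are blue iff $y,y'$ share a column and red otherwise, every transversal triple is red, every column triple is blue. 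Call the resulting $3$-graph $G^{\ast}$. Once these colours are pinned down, the $(t-2)$-subsets $Y\subset L$ that can complete $\{u,w\}\cup Y$ to a monochromatic $\Km{t}$ are exactly the transversals (in red) and the full columns (in blue), i.e.\ precisely the extremal un-partitionable configuration on $(t-2)^2$ points underlying $s_2(K_{t-1})=(t-2)^2$; hence no colouring of the edges through $\{u,w\}$ can avoid a monochromatic $\Km{t}$, so $G^{\ast}\to(\Km{t})_2$. On the other hand, deleting all edges through $\{u,w\}$ from $G^{\ast}$ leaves a non-Ramsey hypergraph (the displayed colouring extends, and since $L$ meets only $t-2$ columns no monochromatic $\Km{t}$ survives). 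Therefore any $G'\in\cM_2(\Km{t})$ with $G'\subset G^{\ast}$ still contains an edge through $\{u,w\}$, so $0<\deg_{G'}(u,w)\le(t-2)^2$, whence the lower bound forces $\deg_{G'}(u,w)=(t-2)^2$ and $s'_{2,2}(\Km{t})\le(t-2)^2$. Choosing the signal senders pairwise disjoint apart from their signal edges, $G'$ moreover contains two vertices lying in no common edge, so $s_{2,2}(\Km{t})=0$.

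The delicate part is this construction: it requires signal senders adapted to $3$-uniform hypergraphs that force prescribed colours on prescribed edges of the ``core'', and one must check that gluing them on neither creates a spurious monochromatic $\Km{t}$ (so that the intended good colourings really are good) nor alters the codegree of $\{u,w\}$ — the standard, but laborious, signal-sender bookkeeping. By contrast the lower bound, and hence everything else, is short once the Burr--Erd\H{o}s--Lov\'asz lemma is available.
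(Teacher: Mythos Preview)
Your approach is essentially the paper's: the same $(t-2)\times(t-2)$ grid for the core, and the signal-sender machinery you invoke is precisely what the paper develops in its Theorem~\ref{thm:BEL_gadget}. For the lower bound you reduce to the graph lemma underlying $s_2(K_{t-1})=(t-2)^2$, whereas the paper runs the identical greedy argument directly in the hypergraph setting (take a maximal family of disjoint ``all-blue'' $(t-2)$-sets $B_1,\dots,B_k$ in $N(u,w)$, colour $\{u,w,y\}$ red for $y\in\bigcup B_i$ and blue otherwise, and use pigeonhole to rule out a red $\Km{t}$ when $k<t-2$). Both are correct; your detour through deleting only the single edge $e_0$ rather than all edges through $\{u,w\}$ also works, since any edge containing the pair $\{u,w\}$ contains both vertices, so a clique avoiding one of $u,w$ uses none of those edges and is therefore already monochromatic under $c$.

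There is, however, a genuine gap in your one-line derivation of $s_{2,2}(\Km{t})=0$. You assert that ``$G'$ moreover contains two vertices lying in no common edge'' once the signal senders are chosen pairwise disjoint, but this needs the fact that a minimal Ramsey $G'\subset G^\ast$ cannot live entirely inside the core $\{u,w\}\cup L$ --- and in that core \emph{every} pair has positive codegree (each $\{y,y'\}\subset L$ lies in $\{u,y,y'\}$; each $\{u,y\}$ lies in $\{u,w,y\}$; etc.). The paper supplies the missing step by a counting argument: the core contains exactly $(t-2)+(t-2)^{t-2}$ copies of $\Km{t}$ (the columns and the transversals, each extended by $\{u,w\}$), and a uniformly random $2$-colouring has fewer than one monochromatic copy in expectation since $[(t-2)+(t-2)^{t-2}]\cdot 2^{1-\binom{t}{3}}<1$ for $t\ge 4$. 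Hence the core alone is not Ramsey, any minimal Ramsey $G'\subset G^\ast$ must contain a vertex $x$ from some attached gadget, and then --- since $G'$ already contains all of $\{u,w\}\cup L$ by the lower bound, so in particular a core vertex outside the signal edge of that gadget --- the disjointness you arranged yields a vertex $y$ with $\deg_{G'}(x,y)=0$. Without this step your conclusion is unsupported.
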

Observe that with
$s'_{2,2}$ we ask for the smallest {\em positive} codegree, while for $s_{2,2}$
we also allow the codegree to be zero.
This in particular means that in 
\emph{any} minimal $2$-Ramsey hypergraph $H$ for $\Km{t}$ we have that a pair of vertices $u$ and $v$ 
are either not contained in a common edge or have codegree at least $(t-2)^2$. 

\subsection*{Methods} The methods we are going to use are  generalizations of signal senders introduced 
first by Burr, Erd\H{o}s and Lov\'asz in~\cite{BEL76}, and generalized later by 
Burr, Ne{\v{s}}et{\v{r}}il and R{\"o}dl~\cite{bnr1984} and by R\"odl and Siggers~\cite{rs2008}, that we 
 combine with probabilistic arguments  analyzing certain 
properties of random $3$-uniform hypergraphs.

\subsection*{Organization of the paper}
In the next section, Section~\ref{sec:bel}, we generalize 
``almost'' Ramsey graphs, i.e.\ graphs whose edge colorings without a monochromatic copy of some complete graph $K_t$
 impose certain color pattern, first introduced by Burr, Erd\H{o}s and Lov\'asz~\cite{BEL76} to hypergraphs. 
 Then we study in Section~\ref{sec:deg_graphs} the vertex degree for minimal $k$-Ramsey $3$-uniform hypergraphs for $\Ktt$,
 while in Section~\ref{sec:codeg_graphs}
 we look into the case of codegrees in  minimal $2$-Ramsey $3$-uniform hypergraphs for $\Ktt$.

\section{\texorpdfstring{BEL-Gadgets for $3$-uniform hypergraphs}{BEL-Gadgets for 3-uniform hypergraphs}}\label{sec:bel}
First we show a lemma that asserts the existence of a $3$-uniform hypergraph $H$ and two edges $f$, $e$ $\in E(H)$ 
with $|f\cap e|=2$ and $e(H[e\cup f])=2$ so that $H$ is not 
$k$-Ramsey for $\Ktt$ with the property that any $k$-coloring of $E(H)$ 
without a  monochromatic  
$\Ktt$ colors the edges $e$ and $f$ differently. 
We will refer to such hypergraphs that impose certain structure on $\Ktt$-free colorings as \emph{BEL-gadgets}. 
Moreover, we refer in the following to a coloring without a 
monochromatic copy of $F$ as an \emph{$F$-free coloring}.

\begin{lemma}\label{lem:senders}
Let $t\ge 4$ and $k\ge 2$ be integers. Then there exist a $3$-uniform hypergraph 
$\cH$ and two edges $e_{\cH},f_{\cH}\in E(\cH)$ with $|f_{\cH}\cap e_{\cH}|=2$ and $e(\cH[e_{\cH}\cup f_{\cH}])=2$ 
such that the following properties hold:
\begin{enumerate}
\item $\cH\not\rightarrow \left(\Ktt\right)_k$,
\item for every $k$-coloring $c$ of $E(\cH)$ which avoids 
 monochromatic copies of $\Ktt$ we have that $c(e_{\cH})\neq c(f_{\cH})$.\label{prop:colors}
\end{enumerate}
\end{lemma}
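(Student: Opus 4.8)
The plan is to build a \emph{negative signal sender} (a BEL-gadget in the sense above) whose two signal edges share exactly two vertices, adapting to $3$-uniform hypergraphs the classical construction of Burr--Erd\H{o}s--Lov\'asz and its refinements by Burr--Ne\v{s}et\v{r}il--R\"odl and R\"odl--Siggers. The backbone is a single \emph{critical configuration}: a $3$-uniform hypergraph $G$ with $G\rightarrow(\Ktt)_k$ together with a distinguished edge $g=\{x_1,x_2,x_3\}$ such that $G-g\not\rightarrow(\Ktt)_k$. Such a pair exists, since $K^{(3)}_{r_k(\Ktt)}\rightarrow(\Ktt)_k$ implies $\cM_k(\Ktt)\neq\emptyset$; we may take $G\in\cM_k(\Ktt)$ and let $g$ be \emph{any} edge of $G$, so that $G-g\not\rightarrow(\Ktt)_k$ by minimality. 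The role of $g$ is to ``carry a signal'': fixing any $\Ktt$-free $k$-colouring $c_0$ of $G-g$ (one exists), and recalling that $G\rightarrow(\Ktt)_k$, we see that for \emph{every} colour $i\in[k]$ the colouring $c_0$ extended by $c(g)=i$ contains a monochromatic $\Ktt$, necessarily through $g$ (as $c_0$ is $\Ktt$-free on $G-g$); hence there is a set $W_i=W_i(c_0)$ of $t-3$ vertices such that in $c_0$ all triples of $W_i\cup\{x_1,x_2,x_3\}$ other than $g$ receive colour $i$.

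The sender $\cH$ is then assembled by amalgamation. Introduce four new vertices $p,q,a,b$, declare $e_{\cH}:=\{p,q,a\}$ and $f_{\cH}:=\{p,q,b\}$ to be the signal edges, and put \emph{no} other triple on $\{p,q,a,b\}$, so that $|f_{\cH}\cap e_{\cH}|=2$ and $e(\cH[e_{\cH}\cup f_{\cH}])=2$ hold by construction. For each colour $i\in[k]$ attach a private copy $G_{(i)}$ of $G-g$, together with a bounded number of auxiliary ``relay'' gadgets---themselves built from further copies of $G-g$ and small positive signal senders---whose purpose is to transmit the colours of $e_{\cH}$ and of $f_{\cH}$ onto prescribed edges of $G_{(i)}$ playing the role of $g$'s forbidden completion, arranged so that: if $c(e_{\cH})=c(f_{\cH})=i$ then the relays force those prescribed edges of $G_{(i)}$ to colour $i$, which together with the colour-$i$ clique-minus-slot $W_i$ inside $G_{(i)}$ completes a monochromatic copy of $\Ktt$; whereas if $c(e_{\cH})\neq c(f_{\cH})$, or in any single colour class examined on its own, all attached pieces admit $\Ktt$-free colourings. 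A key point that keeps the construction under control is that amalgamating copies along vertex sets introduces no triple joining the private parts of two distinct copies, so no copy of $\Ktt$ in $\cH$ can span two copies; every $\Ktt$ of $\cH$ lies inside a single copy of $G-g$ (extended by the few identified vertices), for which the pieces' colourings are designed to be $\Ktt$-free.

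Given such an $\cH$, the two assertions follow. For property~\eqref{prop:colors}: if a $\Ktt$-free $k$-colouring $c$ of $\cH$ had $c(e_{\cH})=c(f_{\cH})=i$, then the relays and $G_{(i)}$ would force a monochromatic $\Ktt$ in $\cH$, a contradiction; hence $c(e_{\cH})\neq c(f_{\cH})$. For property (1), that $\cH\not\rightarrow(\Ktt)_k$, one exhibits an explicit $\Ktt$-free $k$-colouring: set $c(e_{\cH}):=1$, $c(f_{\cH}):=2$, and colour each attached copy of $G-g$ and each relay by a suitably colour-permuted version of its good colouring $c_0$, chosen to agree on the (few) identified vertices; since no $\Ktt$ spans two copies, the $\Ktt$-freeness of each piece guarantees that the global colouring is $\Ktt$-free. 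As $\cH$ is finite and $3$-uniform and $e_{\cH},f_{\cH}$ meet the stated conditions, this establishes the lemma.

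The step I expect to be the main obstacle is the design of the relays and of the amalgamation so that it forces \emph{exactly} the diagonal constraint $c(e_{\cH})=c(f_{\cH})$ and nothing stronger---that is, so that the ``off-diagonal'' colour assignments to $e_{\cH},f_{\cH}$ can simultaneously be completed to a globally $\Ktt$-free colouring of $\cH$. This in turn requires constructing the elementary relay and positive-sender gadgets for $3$-uniform hypergraphs from the critical configuration, and carefully tracking which triples each signal forces and how the various $\Ktt$-free colourings of the copies of $G-g$ match up on their overlaps; it may be convenient here to take $G$ not simply as an arbitrary element of $\cM_k(\Ktt)$ but as a $\Ktt$-minimal Ramsey subhypergraph of a sparse random hypergraph $H^{(3)}(n,p)$, so that the overlaps stay small and the local structure around $g$ is generic enough for the relays to attach cleanly.
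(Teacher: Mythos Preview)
Your proposal is a sketch rather than a proof, and it has a genuine circularity that would block it from going through. You invoke ``small positive signal senders'' as black boxes in building the negative sender, but in the standard development---and in this very paper---positive senders (Lemma~\ref{lem:monocolor}) are constructed \emph{from} negative senders (via Lemma~\ref{lem:rainbow}, which in turn uses Lemma~\ref{lem:senders}). So assuming positive senders here assumes what you are trying to prove. You acknowledge this as ``the main obstacle,'' but it is not merely an obstacle to filling in details; it is a gap in the logical order of construction that your outline gives no way around.

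There is a second, more structural gap. Your plan is that if $c(e_{\cH})=c(f_{\cH})=i$ then relays force certain prescribed edges of $G_{(i)}$ to colour $i$, completing ``the colour-$i$ clique-minus-slot $W_i$ inside $G_{(i)}$.'' But $W_i=W_i(c_0)$ exists only for the particular $\Ktt$-free colouring $c_0$ you chose when you analysed the critical edge $g$; the adversary is free to colour $G_{(i)}$ by some \emph{other} $\Ktt$-free colouring, for which the ``slot'' lies elsewhere. One cannot pre-select which edges to force without first controlling the entire colouring of $G_{(i)}$---and that control is precisely what BEL gadgets provide, again presupposing what you want to build.

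The paper avoids both issues by bootstrapping the negative sender directly. It starts from a critical configuration $F_\ell$ obtained by adding edges through a fixed pair $\{m-1,m\}$ one at a time until the hypergraph becomes Ramsey, then studies the set $\cP$ of admissible colour \emph{patterns} on those $\ell$ edges. The key combinatorial fact is that no pattern in $\cP$ has a colour class of size $0$ or $\ell$. This lets one transfer the problem to vertex colourings of an $\ell$-uniform hypergraph $H^*$ of girth $\ge 3$ and chromatic number $>k$ (Erd\H{o}s--Hajnal), where a minimality-and-shifting argument produces two vertices $x,y$ that must receive different colours in every admissible colouring. Gluing copies of $F_\ell$ along the edges of $H^*$ then yields $\cH$ with signal edges $\{m-1,m,x\}$ and $\{m-1,m,y\}$. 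This route needs no pre-existing senders and no control over where the monochromatic almost-clique sits.
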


\begin{proof}
Set $m=r_k(\Ktt)$ and define a hypergraph $F'$ on the vertex set $[m]$ as follows: delete from $\Km{m}$ 
all edges that contain vertices $m-1$ and $m$. It is easy to see that then $F'\not\longrightarrow (\Ktt)_k$. 
Indeed, fix a $k$-coloring of $E(\Km{m-1})$ without a monochromatic $\Ktt$, 
then extend this coloring to $E(F')$ by coloring each edge $(x,y,m)$ with the color of $(x,y,m-1)$.
Since every copy of $\Ktt$ in $F'$ may contain at most one of the vertices $m-1$ and $m$, we see 
$F'\not\longrightarrow (\Ktt)_k$.

Define $F_i:=\left([m], E(F')\cup\left\{\{j,m-1,m\}\colon j\le i\right\}\right)$ and 
set $F:=F_\ell$ where $\ell$ is maximal such that $F_\ell$ is not $k$-Ramsey for $\Ktt$ but 
$F_{\ell+1}$ is (this is possible since $F_{m-2}=\Km{m}$ is $k$-Ramsey for $\Ktt$ by the choice of $m=r_k(\Ktt)$). 

For a coloring $\psi\colon E(F)\to [k]$ without a monochromatic copy of $\Ktt$ 
we define an \emph{admissible pattern} $(a_1,\ldots,a_k)$, where $a_i$ denotes the number
of edges in the color $i$ containing both vertices $m-1$ and $m$.
 Moreover, with $\cP$ we denote the set of all admissible patterns. In particular, by the choice of $\ell$ we have that $\cP\neq\emptyset$. 

Notice that $\sum_{i\in[k]}a_i=\ell$ for every $(a_1,\ldots,a_k)\in{\cP}$, and
$a_c\not\in \{0,\ell\}$ for every $c\in[k]$. Indeed if, say, there is a pattern $(a_1,\ldots,a_k)\in{\cP}$ with $a_j=0$ for some $j\in[k]$, 
then we could take a corresponding $k$-coloring of the edges of $F_{\ell}$ 
avoiding monochromatic copies of $\Ktt$
with pattern $(a_1,\ldots,a_k)$, which then we would extend  to a $k$-coloring of $E(F_{\ell+1})$
without a monochromatic copy of $\Ktt$ just 
by coloring the edge $\{\ell+1,m-1,m\}$ in color $j$. Indeed, this new edge cannot parti\-cipate in
a monochromatic copy of $\Ktt$ in this coloring, as its color is $j$, while all
other edges containing both $m-1$ and $m$ have colors different from $j$. But this is a contradiction
to the definition of $\ell$.

Moreover, notice that the following holds: If $\varphi\colon [\ell]\to [k]$
is a coloring of the first $\ell$ vertices of $F$ such that $(|\varphi^{-1}(1)|,\ldots,|\varphi^{-1}(k)|)\in \cP$,
then there exists a coloring $c\colon E(F)\to [k]$ avoiding monochromatic copies
of $K_t^{(3)}$ such that $c(i,m-1,m)=\varphi(i)$ for every $i\in [\ell]$.

Now, let $H$ be an $\ell$-uniform hypergraph. We say that a coloring $\psi\colon V({H})\to [k]$
is admissible, if for every edge $e\in E({H})$ we have $(c_1,\ldots,c_k)\in {\cP}$ where $c_i$ denotes the number
of vertices in $e$  colored $i$. 

Now we proceed analogously to Claim~2 from \cite{BEL76}.  
We  find an $\ell$-uniform hypergraph $H^*$ with $\girth(H^*)\ge 3$ 
(this means that any two distinct edges $e$ and $f$ satisfy $|e\cap f|\le 1$) and 
two vertices $x,y\in V({H}^*)$ with $\deg_{H^*}(x,y)=0$ such that there exist 
admissible colorings for ${H}^*$ and in every such coloring the color of $x$ differs from the color of $y$.
 For completeness we provide this elegant argument here. 
We start with  an $\ell$-uniform hypergraph $H$ with $\girth({H})\ge 3$ 
and chromatic number $\chi({H})\ge k+1$. 
It was shown that such hypergraphs exist by Erd\H{o}s and Hajnal in~\cite{EH66}.

Then, as every $k$-coloring of the vertices of 
$H$ yields a monochromatic edge, while $(\ell,0,\ldots,0)$,\ldots ,$(0,\ldots,0,\ell)\notin\cP$, $H$ does not have admissible colorings. 
Now, we can take a subhypergraph $H'$
of $H$ which is minimal (with respect to the number of edges) for the property of not having admissible $k$-colorings. 
For an arbitrary edge $f=\{x_1,\ldots,x_\ell\}\in H'$ and arbitrary vertices $y_1,\ldots,y_\ell\not\in V(H')$,
we define a sequence of hypergraphs $H_i$ on $V(H')\cup \{y_1,\ldots, y_i\}$
with $H_i=H'-f+f_i$, where $f_i=\{y_1,\ldots,y_i,x_{i+1},\ldots,x_\ell\}$. 
By the definition,  $H_0=H'$ does not have admissible colorings while $H_\ell$ does, so there is 
 a minimal index $i\in[\ell]$ such that $H_{i-1}$ does not have admissible colorings,
but $H_i$ does. We now set $H^*=H_i$ and $x:=x_i$, $y:=y_i$. It is clear that 
$\girth(H^*)\ge 3$, $\deg_{H^*}(x,y)=0$ and that $H^*$ has admissible colorings. Moreover, 
for any such admissible $k$-coloring $x$ and $y$ need to have distinct colors as otherwise, by taking 
an admissible coloring of $H_i$ with $x$ and $y$ colored the same and then identifying $x$ with $y$ would yield an admissible coloring of  
$H_{i-1}$, a contradiction.

Finally, we define a $3$-uniform hypergraph $\cH$ as follows. First we introduce for each $e\in E(H^*)$  a set 
$V_e:=e\cup \{m-1,m\}\cup (\{e\}\times\{\ell+1,\ldots, m-2\})$ and then we define a $3$-uniform hypergraph  
$F_e$ which is a copy of 
 $F=F_{\ell}$ that contains all vertices from $e$ as follows: 
\[
F_e:=\left(V_e,\binom{V_e}{3}\setminus\left\{\{(e,i),m-1,m\}\colon i=\ell+1,\ldots, m-2\right\}  \right). 
\]
The hypergraph $\cH$ is then the union over all $F_e$'s: $\cH:=\cup_{e\in E(H^*)}F_e$.  
In other words, we obtain $\cH$ by placing $F_e$, a copy of $F$, for each edge $e\in E(H^*)$ 
so that the vertices $\{1,\ldots,\ell\}$ of $F$ are identified with $e$.  
Further, we set $e_{\cH}=\{m-1,m,x\}$ and $f_{\cH}=\{m-1,m,y\}$. 
Before showing that 
${\cH}$, $e_{\cH}$ and $f_{\cH}$ fulfill the requirements (1) and (2), we establish the following claim.
\begin{claim}\label{cl:Kts}
 Any copy $K$ of  $\Km{t}$  in $\cH$  is contained in $F_e$ for some $e\in E(H^*)$.
\end{claim}
\begin{proof}
Assume first $V(K)\setminus (\{m-1,m\}\cup V(H^*))\neq\emptyset$ holds. 
 Thus $K$ contains a vertex of the form $(e,s)$, whose link is a graph on  
$m-1$ vertices which must form the set  $V_e\setminus\{(e,s)\}$, by construction of $\cH$. 
This, with $\cH[V_e]=F_e$, then implies that $K\subseteq F_e$.

From now on we may assume that  $V(K)\subseteq V(H^*)\cup\{m-1,m\}$. 
First we assume that $K\cong \Km{4}$ and $m-1, m\in V(K)$. Thus, 
the remaining two vertices, call them $a$ and $b$, must lie in 
some edge $e\in E(H^*)$ (since $\{m,a,b\}$ is an edge in 
$\cH\left[V(H^*)\cup\{m-1,m\}\right]$), which  implies $K\subseteq F_e$.
 Finally, we may assume that $|V(K)\cap V(H^*)|\ge 3$ and setting $S:=V(K)\cap V(H^*)$ 
 we have  $K[S]\cong \Km{s}$, $s\geq 3$. Since $\cH\left[V(H^*)\right]$ consists of cliques $\Km{\ell}$ that intersect in at most 
one vertex as $\girth(H^*)\ge 3$, this implies that $S$ has to be contained in some $e\in E(H^*)$. Again this 
yields $K\subseteq F_e$.
\end{proof}

Recall that we defined  $e_{\cH}=\{m-1,m,x\}$ and $f_{\cH}=\{m-1,m,y\}$. 
By construction of $\cH$ and since $\deg_{H^*}(x,y)=0$, it is 
clear that $\{x,y,m-1\}$ and  $\{x,y,m\}$ are nonedges in $\cH$.                                                                                                            
 We now prove that this choice of $\cH$, $e_{\cH}$ and $f_{\cH}$ fulfills the requirements (1) and (2) of our lemma:
\begin{enumerate}
\item By construction there exists an admissible coloring 
$c\colon V(H^*)\to [k]$. Notice that two hypergraphs $F_e$ and $F_f$ for distinct $e, f \in E(H^*)$ 
have in common both vertices $m-1$ and $m$
 and additionally at most one further vertex $v$ (and if so also the edge $\{v,m-1,m\}$), 
 by construction and since $\girth(H^*)\ge 3$.
 Since $\cH$ consists of copies of $F$ that intersect pairwise in at most one edge 
(containing both vertices $m-1$ and $m$), we can find colorings of these copies without monochromatic $\Ktt$ so that these 
colorings agree on common edges $\{v,m-1,m\}$. Indeed, for every edge $e\in E(H^*)$ we have an admissible color pattern 
$(d_1,\ldots,d_k)\in\cP$ which depends on $c$. Thus, there exists a coloring $\varphi_e\colon E(F_e)\to [k]$ 
without monochromatic $\Km{t}$ so that 
$\varphi_e(\{v,m-1,m\})=c(v)$ for all $v\in e$. 

We need to show that the union of $\varphi_e$ 
over all $e\in E(H^*)$ gives us a $k$-coloring $\varphi$ of $E(\cH)$
without monochromatic copies of $\Km{t}$. By Claim~\ref{cl:Kts}, any copy of $\Km{t}$ is contained in 
 $F_e$ for some $e\in E(H^*)$. Since $E(F_e)$ does not contain any monochromatic $\Km{t}$ under $\varphi_e$, the requirement (1) is verified. 
\item Now, let $c\colon E(\cH)\to [k]$ be a coloring on the edge set of $\cH$ which avoids monochromatic copies
of $\Km{t}$. Define $\varphi: V(H^*)\to [k]$ with $\varphi(v):=c(\{v,m-1,m\})$. 
Then $\varphi$ is an admissible coloring of $H^*$ and thus, by the properties of $H^*$ we know
that $c(e_{\cH})=\varphi(x)\neq \varphi(y)=c(f_{\cH})$.\qedhere
\end{enumerate}
\end{proof} 

We introduce the following definition of a path in hypergraphs. In an $r$-uniform 
path (or $r$-path for short notation) with $t$ edges $e_1$,\ldots,$e_t$ the vertices of $\cup_{i\in [t]} e_i$ 
are ordered linearly and the edges are \emph{consecutive} segments with the property 
that $e_i\cap e_{i+1}\neq \emptyset$ for all $i\in[t-1]$. We will refer to the edges $e_1$ 
and $e_t$  as \emph{ends} of such a path. 
In particular, in our notation the path is a 
vertex-connected subhypergraph of a so-called tight path on the vertex set 
$\cup_{i\in [t]} e_i$ (where in a tight path it is $|e_i\cap e_{i+1}|=r-1$).

Further we say that two edges $e$ and $f$ have distance $\dist_H(e,f):=s$ in $H$ if any 
$r$-uniform path in $H$ with ends $e$ and $f$ contains at least $s$ vertices and there exists
at least one such path with exactly $s$ vertices. 
We call a path from $e$ to $f$ with $\dist_H(e,f)$ vertices a shortest path. If no such path exists, 
we set $\dist_H(e,f):=\infty$.

First we show a lemma that allows us to obtain a ``rainbow star''. 
\begin{lemma}\label{lem:rainbow}
Let $t\ge 4$ and $k\ge 2$ be integers. Then there exist a $3$-uniform hypergraph 
$\cH$, a $2$-element set $S\subset V(\cH)$ and edges 
$e_{1}$, \ldots, $e_k\in E(\cH)$ with $e_{i}\cap e_{j}=S$ (for all $i\neq j\in[k]$), $|\cup_{i\in[k]} e_{i}|=k+2$ and $e(\cH[\cup_{i\in[k]} e_{i}])=k$ 
such that the following properties hold:
\begin{enumerate}
\item $\cH\not\rightarrow \left(\Ktt\right)_k$,
\item for every $k$-coloring $c$ of $E(\cH)$ which avoids 
 monochromatic copies of $\Ktt$ we have that $\{c(e_i)\colon i\in[k]\}=[k]$, that is the colors of $e_i$s are all distinct. \label{prop:rainbow}
\end{enumerate}
\end{lemma}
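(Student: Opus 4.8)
The strategy is to build $\cH$ by gluing together $k-1$ copies of the BEL-gadget from Lemma~\ref{lem:senders}, arranged in a ``fan'' so that they force $k$ edges sharing a common pair $S$ to receive pairwise distinct colors. Concretely, fix a $2$-element set $S=\{u,v\}$ and $k$ new vertices $w_1,\dots,w_k$, and let $e_i:=S\cup\{w_i\}$. For each pair $1\le i<j\le k$ I want $c(e_i)\neq c(e_j)$ in every $\Ktt$-free coloring; a single signal sender of ``negative'' type between $e_i$ and $e_j$ would accomplish this, but Lemma~\ref{lem:senders} only gives gadgets whose two distinguished edges overlap in two vertices, and here $e_i\cap e_j=S$ has size $2$ already, which is exactly the interface the lemma provides. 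So for each pair $\{i,j\}$ I would take a fresh copy $\cH_{ij}$ of the hypergraph $\cH$ from Lemma~\ref{lem:senders}, on otherwise disjoint vertex sets, and identify its two distinguished edges $e_{\cH_{ij}},f_{\cH_{ij}}$ (which meet in a $2$-set) with $e_i,e_j$ respectively — consistently identifying the shared $2$-set of the gadget with $S$. Define $\cH$ to be the union of all these copies (sharing only the vertices $S\cup\{w_1,\dots,w_k\}$ and the edges $e_1,\dots,e_k$). By construction $e_i\cap e_j=S$, $|\bigcup e_i|=k+2$, and $e(\cH[\bigcup e_i])=k$, as required.

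\textbf{Verifying property (2).} This is the easy direction: given any $\Ktt$-free $k$-coloring $c$ of $E(\cH)$, its restriction to each $\cH_{ij}$ is $\Ktt$-free, so by Lemma~\ref{lem:senders}(2) we get $c(e_i)=c(e_{\cH_{ij}})\neq c(f_{\cH_{ij}})=c(e_j)$. Since this holds for all pairs $i\neq j$, the $k$ colors $c(e_1),\dots,c(e_k)$ are pairwise distinct, hence form all of $[k]$.

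\textbf{Verifying property (1).} This is the main obstacle and needs care, because the copies $\cH_{ij}$ overlap in the shared edges $e_1,\dots,e_k$, so $\Ktt$-free colorings of the individual gadgets need not glue. To handle this I would first fix a proper $k$-coloring $\chi\colon\{w_1,\dots,w_k\}\to[k]$ that is a bijection (so all $w_i$ get distinct colors), color $e_i$ with color $\chi(w_i)$ — equivalently, this is a ``rainbow'' target pattern on $e_1,\dots,e_k$. The point is that Lemma~\ref{lem:senders}(1) together with property~(2) of that lemma should actually be upgraded, or re-derived, to the statement: \emph{for every choice of distinct colors $\alpha\neq\beta$ there is a $\Ktt$-free coloring of $\cH$ (from Lemma~\ref{lem:senders}) with $c(e_{\cH})=\alpha$, $c(f_{\cH})=\beta$.} This follows by the same argument as part~(1) of Lemma~\ref{lem:senders}: admissible colorings of $H^*$ correspond to $\Ktt$-free colorings of $\cH$, admissible colorings exist and force $x,y$ to differ, and by permuting colors (the construction is symmetric under relabeling $[k]$) we can realize any prescribed distinct pair of values on $e_{\cH},f_{\cH}$. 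Granting this, color each $\cH_{ij}$ by a $\Ktt$-free coloring realizing the distinct pair $(\chi(w_i),\chi(w_j))$ on $(e_i,e_j)$; these colorings all agree on the shared edges $e_1,\dots,e_k$ (each $e_i$ always gets color $\chi(w_i)$), so they patch to a well-defined coloring of $\cH$. It remains to check no monochromatic $\Ktt$ appears: any copy of $\Ktt$ in $\cH$ that uses a vertex outside $S\cup\{w_1,\dots,w_k\}$ lies entirely in one $\cH_{ij}$ (its vertices of degree constraints force it inside a single gadget, analogously to Claim~\ref{cl:Kts}), where the coloring is $\Ktt$-free; and a copy of $\Ktt$ using only vertices of $S\cup\{w_1,\dots,w_k\}$ would be a $\Km{t}$ on $t\le k+2$ vertices all of whose edges lie among $e_1,\dots,e_k$, but those are the only edges present there and they pairwise intersect in $S$, so no two of them together with a third vertex span a triangle-of-edges forming even a $\Km{4}$ — hence no such copy exists for $t\ge 4$. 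Thus $\cH\not\rightarrow(\Ktt)_k$, completing the proof.

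\textbf{Remark on the obstacle.} The delicate point is really the consistency of the gadget colorings across shared edges, which is why I route everything through the strengthened form of Lemma~\ref{lem:senders}(1) that lets me prescribe \emph{any} distinct pair of colors on the distinguished edges; without that, one would have to argue more carefully that a single global rainbow pattern on $e_1,\dots,e_k$ is simultaneously extendable inside every $\cH_{ij}$. Everything else is bookkeeping: counting $|\bigcup e_i|=k+2$ and $e(\cH[\bigcup e_i])=k$ is immediate from the construction, and the ``copy of $\Ktt$ lives in one gadget'' claim is proved exactly as Claim~\ref{cl:Kts}.
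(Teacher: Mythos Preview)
Your proposal is correct and follows essentially the same approach as the paper: take a copy of the negative signal sender from Lemma~\ref{lem:senders} for every pair $\{i,j\}$, glue them along the common $2$-set $S$ so that the distinguished edges become $e_i,e_j$, use color-symmetry to extend a fixed rainbow pattern on $e_1,\dots,e_k$ to a $\Ktt$-free coloring of each gadget, and observe that every copy of $\Ktt$ lies inside a single gadget. One small slip: in your opening sentence you write ``$k-1$ copies'', but the construction you actually carry out (and need) uses $\binom{k}{2}$ copies, one per unordered pair; $k-1$ gadgets in a path configuration would only force consecutive $e_i$'s to differ, which does not suffice.
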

\begin{proof}
Take $\binom{k}{2}$ vertex-disjoint copies $(\cH_{ij})_{1\le i<j\le k}$ of the hypergraph $\cH'$ as 
guaranteed to us by Lemma~\ref{lem:senders}, and let $e_{ij}$ and $f_{ij}$ be the corresponding edges of $\cH'$ that satisfy Property~\eqref{prop:colors} of Lemma~\ref{lem:senders}.  We start with the hypergraph $H$ on the vertex set $[k+2]$ and with edge set $\left\{\{i,k+1,k+2\}\colon i\in[k]\right\}$, and we set $S:=\{k+1,k+2\}$. 

We construct the hypergraph $\cH$ as follows. For each $i<j\in[k]$ we identify the vertices $k+1$ and $k+2$ (arbitrarily) with the two vertices from $C_{ij}:=e_{ij}\cap f_{ij}$ and the  only vertex from 
$e_{ij}\setminus C_{ij}$ is identified with $i$ while the only vertex from 
$f_{ij}\setminus C_{ij}$ is identified with $j$. Otherwise the hypergraphs $\cH_{ij}$ don't intersect each other in further vertices. We claim that the properties from Lemma~\ref{lem:rainbow} are satisfied. Indeed, since  $\cH_{ij}\not\rightarrow \left(\Ktt\right)_k$ and by the symmetry of the colors, we can assume that there is a $\Ktt$-free coloring
 $\varphi_{ij}$ of $\cH_{ij}$ such that $\varphi(e_{ij})=i$ and $\varphi(f_{ij})=j$ (and $i<j$). We obtain the coloring $\varphi$ of $\cH$ by coloring the corresponding edges according to appropriate $\varphi_{ij}$s.  This is possible since the edge $\{i,k+1,k+2\}$ is identified  with $e_{ij}$ and $f_{\ell i}$ for $\ell<i<j$, and these are colored with the color $i$. The coloring $\varphi$ is $\Ktt$-free, since each copy of $\Ktt$ is contained in one of the $\cH_{ij}$s. 
 To  see Property~\eqref{prop:rainbow}, we use the Property~\eqref{prop:colors} of Lemma~\ref{lem:senders}, which asserts that in any  $\Ktt$-free coloring of $\cH$ the edges $\{i,k+1,k+2\}$ and $\{j,k+1,k+2\}$ are colored differently (with $i<j$).
\end{proof}

The next lemma allows us to construct a BEL-gadget that colors two edges the same.
\begin{lemma}\label{lem:monocolor}
Let $t\ge 4$ and $k\ge 2$ be integers. Then there exist a $3$-uniform hypergraph 
$\cH$ and edges $e$ and $f$ with $|e\cap f|=2$ and $e(\cH[e\cup f])=2$ 
such that the following properties hold:
\begin{enumerate}
\item $\cH\not\rightarrow \left(\Ktt\right)_k$,
\item for every $k$-coloring $c$ of $E(\cH)$ which avoids 
 monochromatic copies of $\Ktt$ we have that $c(e)=c(f)$.\label{prop:equal}
\end{enumerate}
\end{lemma}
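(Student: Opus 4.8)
The plan is to realise $\cH$ as a \emph{positive signal sender}: combine one rainbow star from Lemma~\ref{lem:rainbow} with $2(k-1)$ copies of the gadget from Lemma~\ref{lem:senders}, arranged in a ``double star''. The point is that, with only $k$ colours available, an edge forced to differ from $k-1$ pairwise distinctly coloured edges must take the one remaining colour; applying this simultaneously to $e$ and to $f$ will force $c(e)=c(f)$.

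Concretely, I would take a rainbow star $R$ for parameters $t,k$ as supplied by Lemma~\ref{lem:rainbow}, with its $2$-element set $S=\{u,v\}$ and its $k$ edges $g_1,\dots,g_k$, each of the form $S\cup\{v_i\}$ with the $v_i$ distinct, so that in any $\Ktt$-free colouring the $g_i$ receive all $k$ colours. Then introduce two fresh vertices $w,w'$, set $e:=\{u,v,w\}$ and $f:=\{u,v,w'\}$, and for each $i\in[k-1]$ glue in a fresh vertex-disjoint copy $P_i$ of the Lemma~\ref{lem:senders} gadget, identifying its two signal edges with $e$ and $g_i$ so that the gadget's shared pair goes to $S$, the private vertex of the first signal edge to $w$, and that of the second to $v_i$; likewise glue in a copy $Q_i$ whose signal edges are identified with $f$ and $g_i$. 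Let $\cH$ be the union of $R$ and all the $P_i$ and $Q_i$. Then $e\cap f=S$ has size $2$, and since no single constituent contains both $w$ and $w'$, the only edges inside $e\cup f=\{u,v,w,w'\}$ are $e$ and $f$, so $e(\cH[e\cup f])=2$, as the statement requires.

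Granting the construction, Property~(2) is the easy half: for any $\Ktt$-free colouring $c$ of $\cH$, its restriction to $R$ is $\Ktt$-free, so by Lemma~\ref{lem:rainbow}\eqref{prop:rainbow} the colours $c(g_1),\dots,c(g_{k-1})$ are pairwise distinct and omit exactly one colour $c_0$; restricting $c$ to $P_i$ and applying Lemma~\ref{lem:senders}\eqref{prop:colors} gives $c(e)\neq c(g_i)$ for every $i$, forcing $c(e)=c_0$, and the $Q_i$ similarly force $c(f)=c_0$, hence $c(e)=c(f)$. For Property~(1) I would fix any $\Ktt$-free colouring of $R$, let $c_0$ be the colour it omits on $g_1,\dots,g_{k-1}$, colour $e$ and $f$ with $c_0$, and colour each $P_i$ and $Q_i$ by a $\Ktt$-free colouring matching the colours already chosen on its two signal edges; such colourings exist by Lemma~\ref{lem:senders} and the symmetry of the colours since $c_0\neq c(g_i)$. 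These partial colourings agree on the shared edges $e,f,g_1,\dots,g_{k-1}$, so they combine to a colouring of $E(\cH)$; it remains only to show it has no monochromatic $\Ktt$.

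The hard part will be this last point, the analogue of Claim~\ref{cl:Kts}: every copy $K$ of $\Ktt$ in $\cH$ lies inside a single constituent. Here I would note that the constituents pairwise intersect only in subsets of $W:=\{u,v,w,w',v_1,\dots,v_{k-1}\}$, and that every vertex outside $W$ is private to a unique constituent. If $K$ contains a vertex $a$ private to some constituent $A$, then since $t\ge4$ every pair of vertices of $K$ lies in an edge of $K\subseteq\cH$ while every edge of $\cH$ through $a$ is an edge of $A$; hence $V(K)\subseteq V(A)$, and then $K\subseteq A$ because each constituent is an induced subhypergraph of $\cH$ (which also has to be verified). If instead $K$ uses no private vertex, then $V(K)\subseteq W$, but every edge of $\cH$ inside $W$ contains $S$, so $W$ spans no $\Km{4}$, contradicting $t\ge4$. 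Since each constituent carries a $\Ktt$-free colouring, this yields $\cH\not\to(\Ktt)_k$ and finishes the proof. I expect the bookkeeping here --- checking induced-ness of the constituents and enumerating their pairwise intersections --- to be the only genuinely delicate step.
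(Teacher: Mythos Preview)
Your argument is correct, but it takes a different route from the paper's. The paper simply takes \emph{two} vertex-disjoint copies $\cH_1,\cH_2$ of the rainbow star from Lemma~\ref{lem:rainbow}, with signal edges $e_{1,1},\dots,e_{1,k}$ and $e_{2,1},\dots,e_{2,k}$, and glues them by identifying $e_{1,i}$ with $e_{2,i}$ for all $i\ge 2$ (so that the two centre pairs $S_1,S_2$ coincide). Setting $e:=e_{1,1}$ and $f:=e_{2,1}$, Property~(2) is immediate: in any $\Ktt$-free colouring both stars are rainbow and they agree on $k-1$ of their edges, so the two remaining edges must share the leftover colour. Property~(1) follows since any copy of $\Ktt$ in the glued hypergraph lies entirely in $\cH_1$ or in $\cH_2$, by the same ``private vertex'' observation you make.

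Your construction instead uses one rainbow star together with $2(k-1)$ negative senders from Lemma~\ref{lem:senders}, forcing each of $e,f$ to avoid the $k-1$ colours seen on $g_1,\dots,g_{k-1}$. This is the standard ``positive sender from negative senders plus an indicator'' template, and everything you outline goes through: the induced-ness of each constituent and the fact that $\cH[W]$ spans only edges through $S$ are exactly as you say. The trade-off is that the paper's version needs only two pieces and essentially no bookkeeping beyond the one-line private-vertex claim, whereas your version has $1+2(k-1)$ pieces and the correspondingly longer (though routine) verification you flag at the end. On the other hand, your approach is more modular and would adapt unchanged if one only had Lemma~\ref{lem:senders} and a weaker version of Lemma~\ref{lem:rainbow} available.
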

\begin{proof}
 We take two vertex-disjoint copies of $\cH_1$ and $\cH_2$ as asserted by Lemma~\ref{lem:rainbow}, along with 
 the corresponding edges $e_{1,1}$,\ldots, $e_{1,k}$ for $\cH_1$ and 
 $e_{2,1}$,\ldots, $e_{2,k}$ for $\cH_2$ respectively. Recall that there exist $S_1$ and $S_2$ such that 
 $e_{\ell,i}\cap e_{\ell,j}=S_\ell$ for all $i<j\in [k]$ and $\ell\in[2]$. We obtain the hypergraph $\cH$ by identifying 
 the edge $e_{1,i}$ with $e_{2,i}$ for all $2\le i \le k$ such  that 
 the vertices from $S_1$ are identified  with those from $S_2$. 
 
 We set $e:=e_{1,1}$ and $f:=e_{2,1}$ and claim that $\cH$ fulfills the requirements. By the symmetry of the colors, we may assume that $e_{\ell,i}$ may be colored with the color $i$ for all $i\in[k]$ and $\ell\in[2]$, and then we may extend the coloring by coloring the (otherwise disjoint) copies $\cH_1$ and $\cH_2$ separately. Since any copy of $\Ktt$ is contained fully either in $\cH_1$ or in $\cH_2$, we see $\cH\not\rightarrow \left(\Ktt\right)_k$. On the other hand, any $\Ktt$-free coloring $\varphi$ of $\cH$ is a $\Ktt$-free coloring of $\cH_1$ and $\cH_2$, and from the properties from Lemma~\ref{lem:rainbow} we have that the edges $e_{\ell,1}$,\ldots, $e_{\ell,k}$ are colored differently for each $\ell\in[2]$ and, by the construction, $\varphi(e_{1,i})=\varphi(e_{2,i})$ for all $2\le i\le k$. Thus, 
 we also have $\varphi(e_{1,1})=\varphi(e_{2,1})$.
\end{proof}
 
 Finally, we construct BEL-gadgets with monochromatic edges in every $\Ktt$-free coloring that are ``far'' from each other. 
\begin{lemma}\label{lem:BEL_far_edges}
 Let $s$, $t\ge 4$ and $k\ge 2$ be integers. There exist a $3$-uniform 
hypergraph $H$ and two edges $e,f\in E(H)$ such that the following properties hold:
\begin{enumerate}
\item $H\not\rightarrow \left(\Ktt\right)_k$,
\item $e$ and $f$ have distance at least $s$, and
\item for every $k$-coloring $\varphi$ on $E(H)$ which avoids 
 monochromatic copies of $\Ktt$ we have that $\varphi(e)= \varphi(f)$.
\end{enumerate}
\end{lemma}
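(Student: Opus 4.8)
The plan is to iterate the monochromatic BEL-gadget of Lemma~\ref{lem:monocolor}. I would fix $N:=3s$ and take vertex-disjoint copies $\cH_1,\dots,\cH_N$ of the hypergraph from Lemma~\ref{lem:monocolor}, writing $e^{(i)},f^{(i)}\in E(\cH_i)$ for the images of its two distinguished edges, so that $|e^{(i)}\cap f^{(i)}|=2$, $e(\cH_i[e^{(i)}\cup f^{(i)}])=2$, $\cH_i\not\rightarrow(\Ktt)_k$, and every $\Ktt$-free colouring of $\cH_i$ gives $e^{(i)}$ and $f^{(i)}$ the same colour. Then I would build $H$ by identifying, for each $i\in[N-1]$, the $3$-edge $f^{(i)}$ with the $3$-edge $e^{(i+1)}$ via an arbitrary bijection of their vertices (keeping the copies otherwise disjoint), writing $g_i:=f^{(i)}=e^{(i+1)}$ for the resulting joint edge, and finally set $e:=e^{(1)}$ and $f:=f^{(N)}$.

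Properties (1) and (3) then follow from the gluing arguments already used in this section. For (3): any $\Ktt$-free colouring $\varphi$ of $H$ restricts to a $\Ktt$-free colouring of each $\cH_i$, so $\varphi(e^{(i)})=\varphi(f^{(i)})$ for all $i$; since $f^{(i)}=e^{(i+1)}$, this chains to $\varphi(e)=\varphi(e^{(1)})=\varphi(f^{(1)})=\varphi(e^{(2)})=\dots=\varphi(f^{(N)})=\varphi(f)$. For (1): using Lemma~\ref{lem:monocolor} and permuting colours, each $\cH_i$ admits a $\Ktt$-free colouring in which $e^{(i)}$ and $f^{(i)}$ both receive colour $1$; as consecutive copies overlap exactly in the edge $g_i$ together with its three vertices and both colourings assign $g_i$ colour $1$, these colourings agree on overlaps and combine to a colouring $\varphi$ of $H$. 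Since $\cH_i$ and $\cH_j$ share at most one edge (and nothing when $|i-j|\ge2$), the standard argument---exactly as in Claim~\ref{cl:Kts} and in the proof of Lemma~\ref{lem:monocolor}---shows every copy of $\Ktt$ in $H$ is contained in a single $\cH_i$, because a clique on at least $4$ vertices cannot use both a vertex private to $\cH_i$ and one private to $\cH_{i+1}$ (the triple through these two would be an edge of $H$ belonging to no single copy). Hence $\varphi$ is $\Ktt$-free and $H\not\rightarrow(\Ktt)_k$.

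The remaining task is property (2), namely $\dist_H(e,f)\ge s$, and I expect this to be the main obstacle. The idea is that each joint $g_i$ ($i\in[N-1]$) is a $3$-vertex separator of $H$: every edge of $H$ lies in some $\cH_j$, and $V(\cH_1)\cup\dots\cup V(\cH_i)$ meets $V(\cH_{i+1})\cup\dots\cup V(\cH_N)$ exactly in $g_i$, so no edge of $H-g_i$ joins the two sides. Since $e$ has a vertex on the left side and $f$ a vertex on the right side of this separation (for $i=1$ one uses $|e^{(1)}\cap f^{(1)}|=2$), and any path in $H$ with ends $e$ and $f$ spans a connected subhypergraph containing $e$ and $f$, every such path must meet $g_i$. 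Finally, $g_i\cap g_j=\emptyset$ whenever $|i-j|\ge3$ and consecutive joints overlap in at most two vertices (using $|e^{(i+1)}\cap f^{(i+1)}|=2$ and $V(\cH_a)\cap V(\cH_b)=\emptyset$ for $|a-b|\ge2$), so each vertex of $H$ lies in at most three of $g_1,\dots,g_{N-1}$; hence a path meeting all $N-1$ joints has at least $\lceil(N-1)/3\rceil=s$ vertices, which gives $\dist_H(e,f)\ge s$.

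The hard part will be making the last paragraph fully rigorous: because paths in $3$-uniform hypergraphs can repeatedly revisit the neighbourhood of a joint edge, one cannot simply invoke additivity of distance under edge-gluing and must argue through the separator property, and one has to control exactly how much the successive joints can overlap---which is precisely the role of the clauses $|e\cap f|=2$ and $e(\cH[e\cup f])=2$ in Lemma~\ref{lem:monocolor}. The colouring and non-Ramsey parts are routine repetitions of the gluing arguments behind Lemmas~\ref{lem:senders}--\ref{lem:monocolor}.
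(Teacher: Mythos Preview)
Your chaining strategy is sound for properties (1) and (3), but the distance argument for (2) has a real gap. The claim ``$V(\cH_a)\cap V(\cH_b)=\emptyset$ for $|a-b|\ge 2$'' is false no matter how the gluing is performed: since $|e^{(i+1)}\cap f^{(i+1)}|=2$, the joints $g_i=e^{(i+1)}$ and $g_{i+1}=f^{(i+1)}$ always share two vertices inside $\cH_{i+1}$, and those two vertices therefore lie in $V(\cH_i)\cap V(\cH_{i+2})$. Worse, the phrase ``arbitrary bijection'' is fatal. If at each step you identify the shared pair $e^{(i)}\cap f^{(i)}\subset f^{(i)}$ with the shared pair $e^{(i+1)}\cap f^{(i+1)}\subset e^{(i+1)}$, then one fixed $2$-set $S$ belongs to every joint, so $e=e^{(1)}\supset S$ and $f=f^{(N)}\supset S$ share two vertices and the two-edge tight path on $e\cup f$ shows $\dist_H(e,f)\le 4$ for every $N$. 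Your separator count can be salvaged---for instance, always send the private vertex of $f^{(i)}$ into $e^{(i+1)}\cap f^{(i+1)}$, and then one can verify that $g_i\cap g_j=\emptyset$ once $|i-j|\ge 3$---but this is exactly the work that is missing, and it does not follow from what you wrote.

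The paper controls the bijections explicitly and extracts the distance by a different mechanism. It first manufactures a base gadget with $\dist(e,f)=5$ and $|e\cap f|=1$ by gluing two copies of the Lemma~\ref{lem:monocolor} gadget along their $f$-edges via the \emph{specific} bijection $a\mapsto y_2$, $b\mapsto c$, $y_1\mapsto d$, which deliberately sends only one vertex of the shared pair $\{a,b\}$ into the other shared pair $\{c,d\}$. It then iterates by gluing two copies of the current gadget along an edge, with the explicit stipulation that no vertex of $e_1$ be identified with any vertex of $f_2$ (possible once $|e\cap f|\le 1$), and proves by tracking where a shortest $e_1$--$f_2$ path first leaves $V(H_1)$ that the distance grows by at least one per step. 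So the paper, like you, glues copies along edges, but it pins down the bijection and replaces your global separator count by a local shortest-path analysis.
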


\begin{proof}
 First we construct a hypergraph $\cH$ which is not $k$-Ramsey for $\Km{t}$, but 
contains two edges $e$ and $f$ at distance  $5$ that are colored the same 
by any $k$-coloring of $E(\cH)$ without monochromatic $\Km{t}$. We apply Lemma~\ref{lem:monocolor} twice  
and obtain  $3$-uniform hypergraphs  
 $\cH_1$ with edges $e_{\cH_1}=\{a,b,x_1\}$ and $f_{\cH_1}=\{a,b,y_1\}$ and $\cH_2$ 
with edges $e_{\cH_2}=\{c,d,x_2\}$ and $f_{\cH_2}=\{c,d,y_2\}$ respectively. 
Furthermore, we may assume $V(\cH_1)\cap V(\cH_2)=\emptyset$.
 We define a new hypergraph $\cH$ by taking 
both $\cH_1$ and $\cH_2$ and identifying $y_1$ with $d$, $b$ with $c$, and $a$ with $y_2$. 
Observe that in $\cH$ any copy of $\Km{t}$ is completely contained within one of the $\cH_i$'s. This 
implies that $\cH\not\rightarrow (\Km{t})_k$. 
Indeed, according to Lemma~\ref{lem:senders} we can color  $\cH_1$ and $\cH_2$
without monochromatic $\Km{t}$. Moreover, by swapping the colors appropriately if necessary,
we may do so that the edges $f_{\cH_1}\in E(\cH_1)$ and $f_{\cH_2}\in E(\cH_2)$ receive the same color.
This gives us a $\Km{t}$-free coloring of $E(\cH)$.

Next we use the Property~(2) of Lemma~\ref{lem:monocolor} which asserts that any 
$\Km{t}$-free coloring colors the edges $\{a,b,x_1\}$ and $\{a,b,y_1\}$  the same, and 
the colors of $\{c,d,x_2\}$ and $\{c,d,y_2\}$ are the same as well. Since $\{a,b,y_1\}=\{c,d,y_2\}$ in $\cH$,  
 the edges $f:=\{c,d,x_2\}$ and $e:=\{a,b,x_1\}$ are colored the same through any
  $\Km{t}$-free coloring of $\cH$. 
We thus arrived at a hypergraph $\cH$ that satisfies the following properties:
\begin{enumerate}[label=\abc]
\item there are two edges $e$ and $f$ at distance $5$,
\item $\cH\not\rightarrow \left(\Ktt\right)_k$,
\item for every $k$-coloring $c$ on $E(\cH)$ which avoids monochromatic copies of $\Ktt$ we have that $c(e)= c(f)$.
\end{enumerate}
Next we proceed iteratively. We take two isomorphic hypergraphs $H_1$  and $H_2$,
along with edges $e_1$, $f_1$ and $e_2$, $f_2$ respectively, which satisfy~\iti{b} and~\iti{c}. 
Assuming that 
$\dist_{H_1}(e_1,f_1)=d=\dist_{H_2}(e_2,f_2)$ for some $d\geq 5$, we now aim to construct
a hypergraph $H'$, along with edges $e,f$, such that ~\iti{b} and~\iti{c} hold
and $\dist_{H'}(e,f)\geq d+1$. For the construction, we identify  
the edge $f_1$ with $e_2$ such that none of the vertices of $e_1$ and $f_2$ 
are identified, and we set $e=e_1$ and $f=f_2$. This way the properties~\iti{b}  and~\iti{c} 
are naturally preserved in $H'$.

Thus, it remains to show that the distance between $e_1$ and $f_2$ is 
 at least $d+1$ in $H'$. 
 Let $v_1$, \ldots, $v_\ell$ 
be the  vertices 
 of a shortest path from $e_1$ to $f_2$ in $H'$ in the linear order, i.e.\ $\{v_1,v_2,v_3\}=e_1$ and 
$\{v_{\ell-2},v_{\ell-1},v_\ell\}=f_2$. 
Let $i\ge 4$ be the smallest index 
such that $v_i\not\in V(H_1)$. If $i<d-1$, then we have $v_{i-1}\in f_1$
 and in case $\{v_{i-3},v_{i-2},v_{i-1}\}\not\in E(H_1)$ holds then we additionally have $\{v_{i-4},v_{i-3},v_{i-2}\}\in E(H_1)$ and $v_{i-2}\in f_1$. In any case we would obtain a $3$-path from $e_1$ to $f_1$ with at most $d-1$ vertices which consists of some 
   edges of $P$ contained in $\{v_1, \ldots, v_{i-1}\}$ and of the edge  $f_1$, a contradiction to $\dist_{H_1}(e_1,f_1)=d$. 
 Thus we may assume $i\ge d-1$. If, additionally, $d>5$ then it follows, that 
none of the vertices from $f_2$ are among $\{v_1,\ldots,v_{i-1}\}$ resulting in $\dist_{H'}(e_1,f_2)\ge d+1$.
If $d=5$, then since none of the vertices of $e_1$ and $f_2$ are identified,  
$\dist_{H'}(e_1,f_2)\ge 6>d$. 
\end{proof}

Now we are in position to build non-Ramsey hypergraphs which assert more structure in any 
$\Ktt$-free coloring.

\begin{theorem}\label{thm:BEL_gadget}
Let $k\ge 2$ and $t\ge 4$ be integers. Let $H$ be a $3$-uniform hypergraph with $H\not\rightarrow \left(\Ktt\right)_k$ and
let $c\colon E(H)\to[k]$ be a $k$-coloring 
which avoids monochromatic copies of $\Km{t}$. 
Then, there exists a $3$-uniform hypergraph $\cH$ with the following properties:
\begin{enumerate}
\item $\cH\not\rightarrow \left(\Ktt\right)_k$,
\item $\cH$ contains  $H$ as an induced subhypergraph, and  
\item\label{BEL:pattern} for every coloring $\varphi\colon E(\cH)\to [k]$  
without a monochromatic copy of $\Km{t}$, 
the coloring of $H$ under $\varphi$ agrees with the coloring $c$, up to a permutation of the $k$ colors.
\item If there are two vertices $a,b\in V(H)$ with $\deg_H(a,b)=0$ then $\deg_\cH(a,b)=0$ as well.
\item\label{cond:zerocodegree}  If $|V(H)|\ge 4$ then for every vertex $x\in V(\cH)\setminus V(H)$ 
there exists a vertex $y\in V(H)$  such that $\deg_{\cH}(x,y)=0$.
\end{enumerate}
\end{theorem}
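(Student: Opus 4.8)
The plan is to build $\cH$ by attaching, for every edge $e$ of the "pattern" hypergraph $H$, a BEL-gadget that forces a fixed pair of edges to receive the same color, and then chain these gadgets across all edges of a given color class so that any monochromatic-$\Km{t}$-free coloring of $\cH$ must be constant (up to permutation) on each color class $c^{-1}(i)$. More precisely, fix a reference edge $e_i^0\in c^{-1}(i)$ for each color $i\in[k]$ used by $c$. For every edge $e\in E(H)$ with $c(e)=i$, use Lemma~\ref{lem:BEL_far_edges} with the distance parameter $s$ chosen large (say $s\ge 10$, or larger if needed for property~\eqref{cond:zerocodegree}) to obtain a gadget $G_e$ with two distinguished edges $g_e,g_e'$ at distance $\ge s$ that are forced equal in every $\Km{t}$-free coloring; glue $G_e$ to $H$ by identifying $g_e$ with $e$ and $g_e'$ with $e_i^0$, keeping the interiors of distinct gadgets vertex-disjoint except where they must share $e_i^0$. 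Also use Lemma~\ref{lem:rainbow} once on the $k$ reference edges $e_1^0,\dots,e_k^0$ (suitably glued so that they form a rainbow star) to force them to get pairwise distinct colors; this guarantees that the induced coloring on $H$ uses exactly the colors $c$ used, no fewer, which is what pins it down to a genuine permutation of $c$.

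The verification then proceeds as follows. For property~(1), take the coloring $c$ on $H$, extend it to each gadget $G_e$ by the $\Km{t}$-free coloring promised by Lemma~\ref{lem:BEL_far_edges} (permuting colors inside $G_e$ so that $g_e,g_e'$ both get color $i=c(e)$, which is consistent with $c(e_i^0)=i$), and extend across the rainbow gadget using Lemma~\ref{lem:rainbow}; since every copy of $\Km{t}$ lies entirely inside $H$ or inside a single gadget — this is the analogue of Claim~\ref{cl:Kts} and follows because the gadgets are glued along single edges and the reference edges are far apart — the union coloring is $\Km{t}$-free. For property~(2), $H$ is induced in $\cH$ because all new edges lie inside some gadget and, by the far-apart construction, no gadget introduces a new edge on three vertices of $V(H)$ (a point to check carefully, using that the distinguished edges of the gadget are at distance $\ge s$ and $H$ was only glued at two prescribed edges). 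For property~(3): given any $\Km{t}$-free $\varphi$ on $\cH$, the gadget property forces $\varphi(e)=\varphi(e_i^0)$ for all $e\in c^{-1}(i)$, so $\varphi$ is constant on each color class of $c$; the rainbow gadget forces the $\varphi(e_i^0)$ to be distinct; hence $\varphi|_H$ is obtained from $c$ by the permutation $i\mapsto\varphi(e_i^0)$.

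Properties~(4) and~\eqref{cond:zerocodegree} are handled by the distance parameter. If $\deg_H(a,b)=0$, then any new edge through both $a$ and $b$ would have to come from a gadget glued at an edge incident to both — impossible by construction since gadgets are glued only at the prescribed edges $e$ and $e_i^0$, and their internal vertices are fresh; so $\deg_\cH(a,b)=0$, giving~(4). For~\eqref{cond:zerocodegree}, let $x\in V(\cH)\setminus V(H)$; then $x$ lies in the interior of exactly one gadget $G_e$ (or the rainbow gadget), whose only vertices in $V(H)$ are the (at most six) vertices of the two glued edges. Since $|V(H)|\ge 4$, pick $y\in V(H)$ not among those glued vertices and not adjacent to $x$ via any gadget edge — here is where we need $s$ large enough and the gadget-internal structure (Lemma~\ref{lem:senders}'s construction) to have the property that interior vertices have bounded "reach"; one arranges that $x$'s link in $\cH$ is entirely contained in its gadget, so $\deg_\cH(x,y)=0$ for any $y\in V(H)$ outside that gadget's constant-size trace, which exists by $|V(H)|\ge 4$.

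\textbf{Main obstacle.} The delicate point is property~(2) together with~\eqref{cond:zerocodegree}: one must control exactly which new edges appear on old vertices and guarantee every interior vertex of every gadget fails to form an edge with some vertex of $H$. This forces the gadgets to be glued along \emph{single edges only} (never identifying extra vertices), and requires revisiting the explicit construction in Lemma~\ref{lem:senders} to confirm that the "far-apart" gadget of Lemma~\ref{lem:BEL_far_edges} can be taken so that its two distinguished edges, and the whole gadget, interact with the outside world only through those two edges — choosing the distance parameter $s$ in Lemma~\ref{lem:BEL_far_edges} large enough (as a function of $t$ only) to absorb the at most $6$ boundary vertices per gadget is the mechanism that makes this work.
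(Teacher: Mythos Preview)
Your overall strategy --- attach an equality-forcing gadget from Lemma~\ref{lem:BEL_far_edges} to each edge of $H$ and use a rainbow gadget to separate the colour classes --- is precisely the paper's approach. The crucial difference is where the reference edges live. You pick $e_i^0\in c^{-1}(i)$ \emph{inside} $H$ and glue both ends of each $G_e$ to edges of $H$; the paper instead takes the rainbow gadget $\cH'$ of Lemma~\ref{lem:rainbow} on a vertex set disjoint from $V(H)$, and for every $g\in E(H)$ glues one end of the equality-gadget $H_g$ to $g$ and the other end to the star-edge $e'_{c(g)}$ of $\cH'$.

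This difference creates two genuine problems in your version. First, ``suitably glued so that they form a rainbow star'' is not a construction: the distinguished edges of Lemma~\ref{lem:rainbow} all pass through a common two-element set $S$, whereas arbitrary edges $e_1^0,\dots,e_k^0$ of $H$ need not share any vertices, so there is nothing to identify them with (and if two of them happen to intersect, the gluing of the disjoint far-apart ends of a gadget onto them forces you to identify distinct gadget vertices, which is not covered by Lemma~\ref{lem:BEL_far_edges}). Second --- and this is the real obstruction --- property~\eqref{cond:zerocodegree} can fail. In your construction each gadget $G_e$ meets $V(H)$ in the up to six vertices of $e\cup e_i^0$; when $4\le |V(H)|\le 6$ these may exhaust $V(H)$, and then for an interior vertex $x$ of $G_e$ there is no candidate $y\in V(H)$ outside the gadget's trace. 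Making $s$ larger does nothing here: the issue is the size of the gadget's footprint on $V(H)$, not its internal diameter. Placing the rainbow star outside $H$, as the paper does, shrinks every gadget's footprint on $V(H)$ to the three vertices of a single edge $g$, and then $|V(H)|\ge 4$ is exactly the hypothesis needed to find $y\in V(H)\setminus g$.
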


\begin{proof}
Let a hypergraph $H$ and a $\Km{t}$-free 
coloring $c$ be given according to the theorem. We take a hypergraph $\cH'$ as asserted to us by Lemma~\ref{lem:rainbow}, 
along with the edges $e_1'$,\ldots, $e'_k$, such that
$V(H)\cap V(\cH')=\emptyset$.
Moreover, let $H'$ be given according to Lemma~\ref{lem:BEL_far_edges},
along with edges $e'$ and $f'$ of distance at least 7.
Then, for every edge $g\in E(H)$, we take a copy $H_g$ of the hypergraph 
$H'$ on a set of new vertices, along with edges $e_g$ and  $f_g$ representing $e'$ and $f'$. 
We identify the edge $g$ with $e_g$ and if $g$ is colored $i$ 
 under the coloring $c$ then we identify 
$f_g$ with $e_i'$.  
 We denote the obtained hypergraph by $\cH$. 

We verify the desired properties one by one.
\begin{enumerate}
 \item 
It is easily seen that every copy $F$ of $\Km{t}$ is contained either in $H$ or in $\cH'$ or in some $H_g$ with $g\in E(H)$.
Indeed, if such a copy contains a vertex $x\in V(H_g)\setminus (e_g\cup f_g)$ for some $g\in E(H)$,
then every other vertex $v\in V(F)$ needs to share an edge with $x$, which by construction needs to be part of $H_g$.
Thus, $V(F)\subseteq V(H_g)$ and $F\subseteq \cH[V(H_g)]=H_g$.
Otherwise, $F$ contains no such vertices $x$, and therefore, $V(F)\subseteq V(H)\cup V(\cH')$.
By construction of $\cH$ we know that $\dist_{H_g}(e_g,f_g)\ge 7$ for all $g\in E(H)$ and thus 
$\deg_{\cH[V(H)\cup V(\cH')]}(u,v)=0$ 
for every $u\in V(H)$ and $v\in V(\cH')$,
which yields $F\subseteq H$ or $F\subseteq \cH'$.

Now, we color $E(H)$ according to $c$. 
 As $V(H)\cap V(\cH')=\emptyset$ we can easily extend $c$ to a 
$\Km{t}$-free coloring of $E(H)\cup E(\cH')$ such that $e'_i$ is colored $i$ for each $i\in[k]$. 
Here we use that by Lemma~\ref{lem:rainbow}, the edges  $e_1'$,\ldots, $e'_k$ have different colors
in any $\Km{t}$-free coloring. 
Moreover, observe that for every $g\in E(H)$ we then have that $e_g$ and $f_g$ receive the same color.

Next, we can extend further the above coloring to a $\Km{t}$-free coloring of $E(\cH)$, by Lemma~\ref{lem:BEL_far_edges} and 
since the $H_g$s have only already colored edges from $\{e_1',\ldots, e'_k\}$ in common. 
Thus, $\cH\not\rightarrow \left(\Ktt\right)_k$.

 \item $H$ occurs as an induced subhypergraph in $\cH$ since $\dist_{H_g}(e_g,f_g)\ge 6$ and thus $e_g\cap f_g=\emptyset$ for all $g\in E(H)$.
 \item Given any $\Km{t}$-free coloring $\varphi$ of $\cH$, it holds by Lemma~\ref{lem:rainbow} that 
$e_1'$,\ldots, $e'_k$ are colored differently. Moreover, by Lemma~\ref{lem:BEL_far_edges},
 the edges $f_g$ and $e_g$ are colored the same (for each $g\in e(H)$) in such a way that the $i$th color class of 
$H$ under $c$ obtains the color $\varphi(e_{i}')$ for each $i\in[k]$. 
 \item Suppose that $\deg_H(a,b)=0$ for some two distinct vertices $a$, $b\in V(H)$. By construction, 
 any two of the  
auxiliary hypergraphs (i.e.\ $\cH'$, $H$, $H_g$s) overlap only in one edge (if at all). This way it follows that $\deg_\cH(a,b)=0$.
\item Finally, take some $x\in V(\cH)\setminus V(H)$. If $x\in V(\cH')\setminus \left(\cup_{g\in E(H)} V(H_g)\right)$, then $\deg_{\cH}(x,y)=0$ for all $y\in V(H)$. If $x\in  V(H_g)$ for some $g\in E(H)$, then again, by construction 
 of $\cH$, we have that $x\not\in g\subset V(H)$ and therefore every $y\in V(H)\setminus g$ satisfies $\deg_{\cH}(x,y)=0$. \qedhere
\end{enumerate}
\end{proof}

\section{\texorpdfstring{Minimum degrees of minimal Ramsey $3$-uniform hypergraphs}{Minimum degrees of minimal Ramsey 3-uniform hypergraphs}}\label{sec:deg_graphs}
Before we prove Theorem~\ref{thm:mindegree}, we first show the 
existence of an appropriate BEL-gadget which will be crucial for the upper bound~\eqref{eq:mindeg} 
in Theorem~\ref{thm:mindegree}.

\begin{lemma}\label{lem:goodgadget}
Let $t\ge 4$ and $k\ge2$ be integers. 
There is a $3$-uniform hypergraph $H$ on $n=k^{10kt^4}$ vertices, which can be 
written as an edge-disjoint union of $k$ $3$-uniform hypergraphs $H_1$, \ldots, $H_k$  with the following properties:
\begin{enumerate}[label=\abc]
\item\label{cond:free} for every $i\in[k]$, $H_i$ contains no copies of $\Km{t}$, and 
\item\label{cond:Ramsey} for any coloring $c$ of the edges of the complete graph $K_n$ with $k$ colors 
   there exists a color $x\in[k]$ and $k$ sets $S_1$, \ldots, $S_k$ that induce 
  copies  of $K_{t-1}$ in color $x$ under the coloring $c$ 
 such that $H_1[S_1]\cong\ldots\cong H_k[S_k]\cong \Km{t-1}$.
\end{enumerate}
\end{lemma}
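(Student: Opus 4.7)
My plan is to use Ramsey's theorem to reduce Property~\iti{b} to a local property of each $H_i$ and then establish that local property via the probabilistic method. First, I would fix $m := 10 t^4$ and invoke the standard upper bound $r_k(K_m) \le k^{km}$ for the classical Ramsey numbers, so that $n = k^{10kt^4} \ge r_k(K_m)$. Consequently, any $k$-coloring $c$ of $K_n$ contains a monochromatic copy of $K_m$ in some color~$x$; every $(t-1)$-subset of this $K_m$ is then monochromatic in~$x$, so Property~\iti{b} follows as soon as we guarantee that every $m$-subset $T\subseteq[n]$ contains a copy of $\Km{t-1}$ in each $H_i$.

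Next I would construct the $H_i$'s randomly: each triple $e\in\binom{[n]}{3}$ is independently placed into exactly one of the bins $H_1,\dots,H_k$ with probability $p$ each, and left out with the remaining probability, for a parameter $p$ to be chosen. This makes the $H_i$'s edge-disjoint by design, and each $H_i$ is distributed as $H^{(3)}(n,p)$. Property~\iti{a} is then enforced through the alteration method: delete one triple from each copy of $\Km{t}$ in each $H_i$. A routine expectation computation shows that the number of deleted triples inside any fixed $m$-set is negligible compared to the number of $\Km{t-1}$-candidates therein, so Property~\iti{b} is essentially unaffected by the alteration.

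For Property~\iti{b} itself, I would apply Janson's inequality to bound the probability that a fixed $m$-set $T$ contains no $\Km{t-1}$ in $H_i$ by $\exp(-\Omega(\mu))$ with $\mu = \binom{m}{t-1}p^{\binom{t-1}{3}}$, and then union-bound over the $\le k\cdot n^m$ choices of $(i,T)$. The main obstacle — and the delicate technical heart of the proof — is calibrating $p$: making $\binom{n}{t}p^{\binom{t}{3}}$ small enough that the alteration is cheap pushes $p$ downward, while making $\mu$ dominate $m\log n \approx 10kmt^4\log k$ pushes $p$ upward. The choice $m=\Theta(t^4)$ with $n=k^{10kt^4}$ is tailored precisely so that these two bounds overlap. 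If the direct random approach proves too tight, I would switch to a more structured base distribution — for instance, taking $p$ a small absolute constant so that each $H_i$ has density exceeding the Turán density $\pi(\Km{t-1})$ on every $m$-set by a Chernoff-type concentration, and relying on a supersaturation argument to show that the many surviving $\Km{t-1}$-copies outlast the alteration step that removes the $\Km{t}$'s.
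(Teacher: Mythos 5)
Your reduction of Property \iti{b} to the deterministic statement ``every $m$-subset $T\subseteq[n]$ contains a copy of $\Km{t-1}$ in each $H_i$'' cannot be made to work, and this is the crux of the gap. The Ramsey bound $r_k(K_m)\le k^{km}$ forces $m\le 10t^4$, so $m$ --- and hence $\binom{m}{t-1}$ --- is bounded by a quantity depending on $t$ only. Therefore, for \emph{any} choice of $p\in(0,1]$, the Janson exponent for a fixed $m$-set is $\mu=\binom{m}{t-1}p^{\binom{t-1}{3}}\le\binom{m}{t-1}$, independent of $k$ and $n$. Meanwhile your union bound must cover about $k\binom{n}{m}$ events, whose logarithm is roughly $m\log n\approx 100kt^8\log k$ and grows without bound in $k$. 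So for each fixed $t$ there is a threshold $k_0(t)$ past which the Janson estimate is swamped by the union bound and the argument fails. (Your single-bin sampling additionally forces $p\le 1/k$, but this is not the fundamental obstacle: even at $p=1$ the exponent $\mu$ is bounded.) The supersaturation fallback hits a parallel wall: because the $H_i$ are edge-disjoint, on any $m$-set at least one of them has edge density at most $1/k\le 1/2$, while already $\pi(\Km{4})\ge 5/9>1/2$, so for $t\ge 5$ one cannot make every $H_i$ exceed the Tur\'an density of $\Km{t-1}$ on every $m$-set.

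What the paper does differently is precisely to avoid the lossy passage to a single monochromatic $K_m$. It uses a counting version of Ramsey's theorem, Fact~\ref{fact:Ramsey_count}, to extract at least $f(t)n^{t-1}$ monochromatic copies of $K_{t-1}$ in one color (with $f(t)=k^{-kt^2}$), fixes that family $\cF$ for each coloring, and applies Janson's inequality to the number of members of $\cF$ all of whose triples land in $H_i'$. The resulting exponent $\lambda=f(t)n^{t-1}p^{\binom{t-1}{3}}$ is of order $k^{\Omega(kt^2)}n^2$ for the paper's choice $p=Cn^{-6/((t-1)(t-2))}$, $C=k^{100k/t}$, and this does beat the union bound over all $k^{\binom{n}{2}}$ colorings. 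The paper also samples $k$ independent copies of $H^{(3)}(n,p)$ and deletes overlapping edges afterward rather than sampling into disjoint bins, and it deletes from $H_i'$ \emph{every} edge lying in some $\Km{t}$ rather than one edge per copy, controlling the resulting loss by a separate first-moment count (Claim~\ref{cl:few_bad_edges}). The first of these changes --- counting monochromatic $K_{t-1}$'s globally and union-bounding over colorings, instead of passing to one monochromatic $K_m$ and union-bounding over $m$-sets --- is the one your sketch is missing and the one that makes the calibration possible.
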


Before we proceed we state a simple quantitative version of Ramsey's theorem.
\begin{fact}\label{fact:Ramsey_count}
 Let $n\ge r_k(\ell)$. Then, in any $k$-coloring  
of $E(K_n)$ there are at least 
\[
 \frac{n^\ell}{k(r_k(\ell))^\ell}
\]
 monochromatic copies 
of $K_\ell$ in the same color. 
\end{fact}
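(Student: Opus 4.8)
The plan is to count monochromatic copies of $K_\ell$ by averaging Ramsey's theorem over all $\binom{n}{r_k(\ell)}$-sized vertex subsets, and then using a pigeonhole over the $k$ colors. Write $N := r_k(\ell)$. First I would fix any $k$-coloring $c$ of $E(K_n)$. For every $N$-element subset $W \subseteq [n]$, the restriction of $c$ to $E(K_n[W])$ is a $k$-coloring of $E(K_N)$, and since $n \ge N = r_k(\ell)$ we have $K_N \longrightarrow (K_\ell)_k$; hence $W$ contains at least one monochromatic copy of $K_\ell$.

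Next I would double-count pairs $(W, T)$ where $W \in \binom{[n]}{N}$ and $T \subseteq W$ induces a monochromatic $K_\ell$. On one hand, by the previous paragraph this count is at least $\binom{n}{N}$. On the other hand, each monochromatic $K_\ell$ (on some $\ell$-set $T \subseteq [n]$) is contained in exactly $\binom{n-\ell}{N-\ell}$ sets $W$. Therefore the total number of monochromatic copies of $K_\ell$ in $K_n$ under $c$ is at least
\[
\frac{\binom{n}{N}}{\binom{n-\ell}{N-\ell}} = \frac{n!/(N!\,(n-N)!)}{(n-\ell)!/((N-\ell)!\,(n-N)!)} = \frac{n!\,(N-\ell)!}{N!\,(n-\ell)!} = \frac{\binom{n}{\ell}}{\binom{N}{\ell}} \ge \frac{(n/\ell)^\ell}{N^\ell},
\]
using $\binom{n}{\ell} \ge (n/\ell)^\ell$ and $\binom{N}{\ell} \le N^\ell$. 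Finally, by pigeonhole over the $k$ colors, some single color accounts for at least a $1/k$ fraction of these copies, giving at least $\frac{1}{k}\cdot\frac{(n/\ell)^\ell}{N^\ell}$ monochromatic copies of $K_\ell$ all in the same color.

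This is slightly weaker than the claimed bound $\frac{n^\ell}{k N^\ell}$ by a factor of $\ell^\ell$; to recover the stated bound exactly one can instead double-count ordered $\ell$-tuples spanning monochromatic cliques, or replace $\binom{n}{\ell}$ by the sharper estimate and absorb constants — the statement should really be read as holding up to the harmless $\ell^\ell$ factor, and since only a polynomial lower bound on the count is needed downstream in Lemma~\ref{lem:goodgadget}, this is immaterial. There is no genuine obstacle here: the only mild subtlety is being careful that the averaging argument counts each monochromatic clique with the right multiplicity $\binom{n-\ell}{N-\ell}$, independent of which clique it is.
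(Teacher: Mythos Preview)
Your approach---double-counting pairs $(W,T)$ with $|W|=N=r_k(\ell)$ and $T\subseteq W$ a monochromatic $K_\ell$, then pigeonholing over the $k$ colors---is exactly what the paper does. The only deficiency is in your final estimate: you bound $\binom{n}{\ell}/\binom{N}{\ell}$ via $\binom{n}{\ell}\ge (n/\ell)^\ell$ and $\binom{N}{\ell}\le N^\ell$, which wastes a factor $\ell^\ell$. The paper instead writes
\[
\frac{\binom{n}{N}}{\binom{n-\ell}{N-\ell}}=\frac{n(n-1)\cdots(n-\ell+1)}{N(N-1)\cdots(N-\ell+1)}=\prod_{i=0}^{\ell-1}\frac{n-i}{N-i}\ge \left(\frac{n}{N}\right)^\ell,
\]
using that $\frac{n-i}{N-i}\ge \frac{n}{N}$ for each $i$ since $n\ge N$. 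This recovers the stated bound exactly, so there is no need to read the Fact ``up to an $\ell^\ell$ factor''; you already had the right quantity in hand and just estimated it too crudely.
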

\begin{proof}
Fix an arbitrary red-blue-coloring $\phi$ of $E(K_n)$. 
 First observe that we find in \emph{any} 
subset of $r_k(\ell)$ vertices of $K_n$ a monochromatic $K_\ell$. We estimate   
pairs of subsets of $[n]$ of the form 
$(R,L)$ with $|R|=r_k(\ell)$, $|L|=\ell$ and $L\subset R$ such that all edges from 
$\binom{L}{2}$ are colored the same. As a lower bound we obtain 
$\binom{n}{r_k(\ell)}$, while the upper bound is the number of monochromatic copies of $K_\ell$ 
under $\varphi$ times the number of $r_k(\ell)$-sets containing a particular copy (which is $\binom{n-\ell}{r_k(\ell)-\ell}$). 
This yields that there are at least 
\[
 \binom{n-\ell}{r_k(\ell)-\ell}^{-1}\binom{n}{r_k(\ell)}=
\frac{n\cdot \ldots\cdot(n-\ell+1)}{r_k(\ell)\cdot\ldots (r_k(\ell)-\ell+1)}\ge \left(\frac{n}{r_k(\ell)}\right)^\ell 
\]
monochromatic $K_{\ell}$s. 
Hence the claim follows.
\end{proof}

The rough idea of the proof of Lemma~\ref{lem:goodgadget} is to take $k$ random hypergraphs 
of appropriate density on the same vertex set and then show that even after deleting 
common edges and edges that lie in copies of $\Km{t}$ we are left with $k$  
edge-disjoint hypergraphs that satisfy condition~\ref{cond:Ramsey}. 
We now turn to the details. 

\begin{proof}[Proof of Lemma~\ref{lem:goodgadget}]
We choose  with foresight 
\begin{equation}\label{eq:probability}
 p:=C\cdot n^{\frac{-6}{(t-1)(t-2)}}, \text{ where }C:=k^{100k/t}\text{ and }n=k^{10kt^4}.
\end{equation}
 
We use the simple upper bound on $r_k(t)\le k^{kt-2k+1}$  
and we define $f(t):=k^{-kt^2}$ so that, with 
Fact~\ref{fact:Ramsey_count}, there are at least $f(t)\cdot n^{t-1}$ monochromatic 
copies of $K_{t-1}$ in one of the  colors in any $k$-coloring of the edges of $K_n$. 

We take $k$ independent random $3$-uniform hypergraphs $H'_1$, \ldots, $H'_k\sim 
H^{(3)}(n,p)$, $i\in[k]$, on the vertex set $[n]$,
and we observe first that
\begin{align*}
& \Exp(e(H_i'\cap H_j'))=\binom{n}{3}p^2, \quad
\Exp(e(H_i'))=\binom{n}{3}p \quad \text{and} \quad\\
& \Exp(\text{number of copies of }K_t^{(3)}\text{ in } H'_i)=\binom{n}{t}p^{\binom{t}{3}}
\end{align*}
for all $i\neq j\in[k]$.

For $i\in[k]$, we denote by $E'_i$ the (random) set of edges 
in $H_i'$ that either belong to some copy of $\Km{t}$ in $H_i'$ or 
to the edge set of some hypergraph $H_j'$, $j\in[k]\setminus\{i\}$. 
We set $H_i:=H'_i\setminus E'_i$. Obviously, $H_1$,\ldots, $H_k$ satisfy~\ref{cond:free}. 
To prove the lemma,
it thus remains to show that~\ref{cond:Ramsey} is satisfied with positive probability. 
This will be immediate from the following two claims. 

\begin{claim}\label{cl:few_bad_edges}
With probability larger than $3/5$, the following holds. 
 Each $H_i'$ contains at most $0.2\cdot f(t)\cdot n^{t-1}p^{\binom{t-1}{3}}$
copies of $K_{t-1}^{(3)}$ that contain an edge from $E'_i$.
\end{claim}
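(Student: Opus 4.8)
\textbf{Proof plan for Claim~\ref{cl:few_bad_edges}.}
The plan is to bound the expected number of $K_{t-1}^{(3)}$-copies meeting $E_i'$ and then apply Markov's inequality, splitting $E_i'$ according to the two reasons an edge can land in it: either the edge lies in a copy of $\Km{t}$ inside $H_i'$, or it is shared with some $H_j'$, $j\neq i$. First I would fix one color class, say $H_1'$, and estimate separately the expected number of pairs $(T,e)$ where $T$ induces a $K_{t-1}^{(3)}$ in $H_1'$, $e\in E(T)$, and $e$ lies in a copy of $\Km{t}$; and the expected number of pairs $(T,e)$ where $T$ is a $K_{t-1}^{(3)}$ in $H_1'$, $e\in E(T)$, and $e\in E(H_j')$ for some $j\neq 1$. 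For the second type, the key point is that the events ``$T\subseteq H_1'$'' and ``$e\in E(H_j')$'' are independent (different random hypergraphs), so the expectation is at most $(k-1)\binom{n}{t-1}\binom{t-1}{3}p^{\binom{t-1}{3}}\cdot p$; the extra factor $p$ versus the target $0.2 f(t)n^{t-1}p^{\binom{t-1}{3}}$ is $O(p)=o(1)$ by the choice of $p$ in~\eqref{eq:probability}, since $f(t)=k^{-kt^2}$ is only polynomially small in $C$ whereas $p\to0$. For the first type, the extra copy of $\Km{t}$ forces at least $\binom{t}{3}-\binom{t-1}{3}$ additional edges beyond those of $T$ (the link of the new vertex), contributing an extra $n\cdot p^{\binom{t}{3}-\binom{t-1}{3}}$; one checks that $\binom{t}{3}-\binom{t-1}{3}=\binom{t-1}{2}$ and that $n\,p^{\binom{t-1}{2}}=C^{\binom{t-1}{2}}n^{1-3(t-1)/(t-2)}\to 0$ with the chosen exponent, again making this term $o\!\left(f(t)n^{t-1}p^{\binom{t-1}{3}}\right)$.

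Having shown the expected number of bad $K_{t-1}^{(3)}$-copies in a single $H_i'$ is $o\!\left(0.2 f(t)n^{t-1}p^{\binom{t-1}{3}}\right)$, I would sum over the $k$ colors and apply Markov: the probability that \emph{some} $H_i'$ has more than $0.2 f(t)n^{t-1}p^{\binom{t-1}{3}}$ bad copies is at most $k$ times (that expectation)/(the threshold), which is $o(1)$, hence below $2/5$ for $n$ large (and $n=k^{10kt^4}$ is certainly large enough given $t\ge4$, $k\ge2$); so the claimed event holds with probability larger than $3/5$.

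The routine work is bookkeeping the exponents of $n$ and $p$; the only place requiring care is checking that the ``overhead'' factors — one factor of $p$ for the overlap case, and $p^{\binom{t-1}{2}}$ times $n$ for the embedded-$\Km{t}$ case — are genuinely $o(1)$ \emph{even after dividing by} $f(t)=k^{-kt^2}$. Since $f(t)$ and the constant $C=k^{100k/t}$ depend only on $k,t$ while $n=k^{10kt^4}$ is a huge power, the negative power of $n$ in $p$ dominates: $p=C\cdot n^{-6/((t-1)(t-2))}$ is polynomially small in $n$, so $p\cdot k^{kt^2}\to0$ and $n\,p^{\binom{t-1}{2}}k^{kt^2}\to0$, using $\binom{t-1}{2}\cdot \frac{6}{(t-1)(t-2)}=3>1$. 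That is the main obstacle, and it is precisely why $p$ was ``chosen with foresight'' with that particular exponent; everything else is a direct first-moment computation plus a union bound over the $k$ colors.
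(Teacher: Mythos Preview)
Your overall strategy---first moment plus Markov, splitting $E_i'$ into ``edge lies in a $\Km{t}$'' versus ``edge shared with some $H_j'$''---is exactly what the paper does, and your treatment of the second type is correct. There is, however, a genuine gap in your handling of the first type.

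You write that the extra copy of $\Km{t}$ ``forces at least $\binom{t}{3}-\binom{t-1}{3}$ additional edges beyond those of $T$ (the link of the new vertex), contributing an extra $n\cdot p^{\binom{t-1}{2}}$.'' This tacitly assumes the $\Km{t}$ containing the bad edge $e$ also contains all of $T$, so that the $\Km{t}$ is $T$ plus one new vertex. But the definition of $E_i'$ only requires $e$ to lie in \emph{some} copy of $\Km{t}$ in $H_i'$; that copy may share only the three vertices of $e$ with $T$, or any number $s$ with $3\le s\le t-1$. For overlap $s$ one gets $n^{t-s}$ choices for the remaining vertices of the $\Km{t}$ and $p^{\binom{t}{3}-\binom{s}{3}}$ for its new edges, so the correct bound is a sum over $s$. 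Your term is precisely the $s=t-1$ summand; you have not accounted for $s=3,\ldots,t-2$. The paper carries out this summation and shows it is dominated by the two extreme terms $s=3$ and $s=t-1$ (via the observation that $g(s):=n^{-s}p^{-\binom{s}{3}}$ satisfies $g(s)\le g(3)$ for $3\le s\le t-2$), and then checks that \emph{both} extremes are $o(f(t))$ after dividing out $n^{t-1}p^{\binom{t-1}{3}}$. With the chosen parameters the $s=3$ contribution is of comparable order to the $s=t-1$ one (both scale like $n^{-2}$ times constants in $k,t$), so it cannot simply be ignored.

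A minor side remark: your exponent computation $n\,p^{\binom{t-1}{2}}=C^{\binom{t-1}{2}}n^{1-3(t-1)/(t-2)}$ is off; since $6\binom{t-1}{2}/\big((t-1)(t-2)\big)=3$, the correct value is $C^{\binom{t-1}{2}}n^{-2}$. This does not affect your conclusion that the term is $o(1)$, but it is worth getting right when you redo the calculation including the missing overlap cases.
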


\begin{proof}
Fix an $i\in[k]$. We first consider the number $X$ of copies of $K_{t-1}^{(3)}$
in $H_i'$ that contain an edge $e$ which is part of some copy of $K_{t}^{(3)}$ 
in $H_i'$. For a pair $(T_1,T_2)$ of subsets of $[n]$ with $|T_1|=t-1$ and $|T_2|=t$ we define the indicator 
variable $I_{(T_1,T_2)}$ by
\begin{align*}
I_{(T_1,T_2)}:=\begin{cases}
1,\ \text{ if $H_i'[T_1]\cong\Km{t-1}$ and $H_i'[T_2]\cong\Km{t}$}\\
0,\ \text{ else}
\end{cases}
\end{align*}
and observe that 
\begin{equation}\label{eq:est_X}
X\leq \sum_{s=3}^{t-1} \sum_{\substack{(T_1,T_2):\\ |T_1\cap T_2|=s}} I_{(T_1,T_2)}. 
\end{equation}

By the linearity of expectation it follows that
\begin{multline}\label{eq:estimate_X}
\Exp(X)  \leq \sum_{s=3}^{t-1} n^{t-1} \cdot \binom{t-1}{s}\cdot n^{t-s}\cdot p^{\binom{t-1}{3}+\binom{t}{3}-\binom{s}{3}}\\ 
 \leq 2^tn^{2t-1}p^{\binom{t-1}{3}+\binom{t}{3}}\sum_{s=3}^{t-1} n^{-s}p^{-\binom{s}{3}}.
\end{multline}
Each term  above is dominated by the sum of its first and last summand.
Indeed, let $g(s):=n^{-s}p^{-\binom{s}{3}}$, then for $3\leq s\leq t-2$, we have
\begin{align*}
\frac{g(3)}{g(s)} & = n^{s-3}\cdot p^{\binom{s}{3}-1} = \Big[np^{\frac{s^2+2}{6}}\Big]^{s-3}
 \geq \Big[np^{\frac{s(s+1)}{6}}\Big]^{s-3} \geq \Big[np^{\frac{(t-1)(t-2)}{6}}\Big]^{s-3} \geq 1.
\end{align*}
Thus, we obtain $\Exp(X) \le 2^tn^{2t-1}p^{\binom{t-1}{3}+\binom{t}{3}}\cdot t\cdot \Big(g(3)+g(t-1) \Big)$. 
And we further upper bound $\Exp (X)$ with~\eqref{eq:probability} by
\begin{multline}
 \Exp(X)\le t2^t n^{t-1}p^{\binom{t-1}{3}}\left( n^t p^{\binom{t}{3}}n^{-3}p^{-1}+n^t p^{\binom{t}{3}}n^{-t+1}p^{-\binom{t-1}{3}}\right)\\
\overset{\eqref{eq:probability}}{=} t2^t n^{t-1}p^{\binom{t-1}{3}}\left(C^{\binom{t}{3}}n^{-3}p^{-1}+n^{-2} C^{\binom{t-1}{2}}\right)\\
\overset{\eqref{eq:probability}}{\le} t 2^t  n^{t-1}p^{\binom{t-1}{3}}\left(k^{50kt^2/3}+k^{50kt}\right)n^{-2}\\
\overset{\eqref{eq:probability}}{\le}  2^{t+\log_2 t+1} k^{50kt^2/3} k^{-20kt^4}  n^{t-1}p^{\binom{t-1}{3}}\le \frac{1}{50k} f(t)n^{t-1}p^{\binom{t-1}{3}}.
\end{multline}

So, by Markov's inequality, with probability at least $1-\frac{1}{5k}$ we have,
\begin{equation*}
X \le 0.1 f(t) n^{t-1}p^{\binom{t-1}{3}}.
\end{equation*}

Next, consider the number $Y$ of copies of  $\Km{t-1}$s
in $H_i'$ that contain an edge $e$ from the intersection $E(H_i')\cap E(H_j')$ for a fixed $j\neq i$.
For a subset $S\in \binom{[n]}{t-1}$ and an edge $e\in\binom{S}{3}$
let 
\begin{align*}
I_{(S,e)}:=\begin{cases}
1,\ \text{ if $H_i'[S]\cong K_{t-1}^{(3)}$ and $e\in E(H_{j}')$}\\
0,\ \text{ else}
\end{cases}
\end{align*}
so that $Y \leq \sum_{(S,e)}I_{(S,e)}$.  Then, 
\begin{multline*}
\Exp(Y)\le n^{t-1}\binom{t-1}{3}\cdot p^{\binom{t-1}{3}+1}\overset{\eqref{eq:probability}}{=} n^{t-1}
 p^{\binom{t-1}{3}} \binom{t-1}{3} k^{100k/t} k^{-\frac{60kt^4}{(t-1)(t-2)}}\\ 
\le n^{t-1}
 p^{\binom{t-1}{3}} t^3 k^{25k} k^{-60kt^2} \le \frac{1}{50k^3}f(t) n^{t-1} p^{\binom{t-1}{3}}.  
\end{multline*}

By Markov's inequality, with probability at least $1-\frac{1}{5k^2}$ we then have 
\[
 Y\le \frac{1}{10k} f(t) n^{t-1} p^{\binom{t-1}{3}}.
\]
In particular, with probability at least $3/5$ it holds for all $i\in[k]$ that $H_i'$
contains at most
$0.2\cdot f(t)\cdot n^{t-1}p^{\binom{t-1}{3}}$
copies of $K_{t-1}^{(3)}$ that contain an edge from $E'_i$. Therefore the claim follows.
\end{proof}

\begin{claim}\label{cl:propercopies}
The following holds with probability at least $2/3$.  
 For every coloring $\psi\colon E(K_n)\to [k]$ there is a color
$x$ such that for every $i\in[k]$,
there are at least $0.5 f(t) n^{t-1}p^{\binom{t-1}{3}}$ 
monochromatic copies $F$ of $K_{t-1}$ in color $x$
with $\binom{V(F)}{3}\subseteq E(H_i')$.
\end{claim}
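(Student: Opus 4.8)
The plan is to fix a color index $i\in[k]$ together with an arbitrary graph $G$ on the vertex set $[n]$ that contains at least $f(t)n^{t-1}$ copies of $K_{t-1}$, to bound via Janson's inequality the probability that $H_i'$ supports only few copies of $\Km{t-1}$ on the $(t-1)$-cliques of $G$, and then to union bound over all such $G$ and over $i\in[k]$. Writing $\mathcal{C}(G)$ for the set of $(t-1)$-cliques of $G$ and $N:=|\mathcal{C}(G)|\ge f(t)n^{t-1}$, I would consider
\[
X=X_{G,i}:=\#\Big\{S\in\mathcal{C}(G)\colon \binom{S}{3}\subseteq E(H_i')\Big\}=\sum_{S\in\mathcal{C}(G)}Z_S,\qquad Z_S:=\mathbf{1}\!\left[\binom{S}{3}\subseteq E(H_i')\right].
\]
Each $Z_S$ is the indicator of an increasing event in the edges of $H_i'\sim H^{(3)}(n,p)$, and $\mu^*:=\Exp X=Np^{\binom{t-1}{3}}\ge f(t)n^{t-1}p^{\binom{t-1}{3}}=:\mu$.

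To run Janson's lower-tail inequality I must control the dependency sum $\Delta:=\sum_{S\neq S',\,|S\cap S'|\ge3}\Exp[Z_SZ_{S'}]$. A pair $S,S'\in\mathcal{C}(G)$ with $|S\cap S'|=j$, where $3\le j\le t-2$, satisfies $\Exp[Z_SZ_{S'}]=p^{2\binom{t-1}{3}-\binom{j}{3}}$, and for each $S$ there are at most $\binom{t-1}{j}n^{t-1-j}$ sets $S'$ with $|S\cap S'|=j$, so
\[
\Delta\ \le\ Np^{\binom{t-1}{3}}\sum_{j=3}^{t-2}\binom{t-1}{j}\,n^{\,t-1-j}\,p^{\binom{t-1}{3}-\binom{j}{3}}.
\]
Plugging in $p$ from~\eqref{eq:probability}, the $j$-th factor equals $C^{\binom{t-1}{3}-\binom{j}{3}}n^{e_j}$ with $e_j=2-j+\frac{j(j-1)(j-2)}{(t-1)(t-2)}$; since $e_j$ is convex in $j$ and $e_3,e_{t-2}\le-\tfrac{1}{2}$ (a routine computation), one has $e_j\le-\tfrac{1}{2}$ throughout, whereas $C^{\binom{t-1}{3}}=k^{O(kt^2)}$ is negligible against $n^{1/2}=k^{5kt^4}$. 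Hence every summand is $o(1)$, so $\Delta=o(\mu^*)$ and $\mu^*+\Delta\le2\mu^*$, and Janson's inequality yields
\[
\Prob\!\left[X\le\tfrac{1}{2}\mu\right]\ \le\ \Prob\!\left[X\le\tfrac{1}{2}\mu^*\right]\ \le\ \exp\!\left(-\frac{(\mu^*/2)^2}{2(\mu^*+\Delta)}\right)\ \le\ \exp\!\left(-\tfrac{1}{16}\mu\right).
\]

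Finally I would take a union bound over the at most $2^{\binom{n}{2}}$ graphs on $[n]$ and over $i\in[k]$. Here $\mu=f(t)n^{t-1}p^{\binom{t-1}{3}}=f(t)C^{\binom{t-1}{3}}n^2$, and $f(t)C^{\binom{t-1}{3}}=k^{\,100k\binom{t-1}{3}/t-kt^2}\ge k^{9k}$ for all $t\ge4$, so $\tfrac{1}{16}\mu>\binom{n}{2}\ln2+\ln(3k)$; consequently the probability that there exist $i\in[k]$ and a graph $G$ with at least $f(t)n^{t-1}$ copies of $K_{t-1}$ for which $X_{G,i}\le\tfrac{1}{2}\mu$ is at most $k\,2^{\binom{n}{2}}e^{-\mu/16}<1/3$. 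On the complementary event, of probability $>2/3$, the claim follows: given any $\psi\colon E(K_n)\to[k]$, Fact~\ref{fact:Ramsey_count} (applicable since $n\ge r_k(t-1)$) produces a color $x=x(\psi)$ whose color class $G_x$ carries at least $f(t)n^{t-1}$ copies of $K_{t-1}$, and then for every $i\in[k]$ more than $\tfrac{1}{2}f(t)n^{t-1}p^{\binom{t-1}{3}}$ of those copies $F$ satisfy $\binom{V(F)}{3}\subseteq E(H_i')$. The one genuinely delicate point is the estimate on $\Delta$, which has to be valid for \emph{every} graph $G$ at once: this is exactly where the exponent $-6/((t-1)(t-2))$ in~\eqref{eq:probability} is used, placing $\Km{t-1}$ strictly below its threshold of appearance so that overlapping pairs of cliques are of lower order, while the subsequent requirement that $e^{-\mu/16}$ beat the $2^{\binom{n}{2}}$ union bound is precisely what dictates the size of the constant $C=k^{100k/t}$ hidden in $p$.
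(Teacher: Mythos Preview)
Your proof is correct and follows essentially the same approach as the paper: apply Janson's lower-tail inequality to the count of $(t-1)$-cliques supported by $H_i'$, then union bound. The only cosmetic differences are that you union bound over the $2^{\binom{n}{2}}$ possible monochromatic subgraphs rather than over the $k^{\binom{n}{2}}$ colorings (slightly tighter for $k>2$, and arguably cleaner), and you control the correlation term $\Delta$ via the convexity of the exponent $e_j$ whereas the paper bounds the same sum by comparing each summand to its endpoint values; both arguments exploit the same feature of the choice $p=Cn^{-6/((t-1)(t-2))}$.
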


\begin{proof} Fix an $i\in[k]$. 
Let $\psi\colon E(K_n)\to [k]$ be an arbitrary coloring. 
Then there is a color $x$
such that there are at least $f(t)n^{t-1}$ monochromatic copies of $K_{t-1}$
under coloring $\psi$ which all have the same color $x$ (by Fact~\ref{fact:Ramsey_count}). 
We  fix a family $\cF=\{F_1,\ldots,F_m\}$ of exactly $m=f(t)n^{t-1}$ such copies (say lexicographically smallest ones). 
Now, denote with $X_{{\cF},i}$ the number of such $F_j\in\cF$ with
$\binom{V(F_j)}{3}\subseteq E(H_i')$. For every $F_j\in\cF$ let
\begin{equation*}
X_{F_j,i}=\begin{cases}
1,\ \text{ if } \binom{V(F_j)}{3}\subseteq E(H_i')\\
0,\ \text{ else}
\end{cases}
\end{equation*}
and observe that $X_{{\cF},i}=\sum_{F\in{\cF}} X_{F,i}$.
We define   
$\lambda:=\Exp(X_{{\cF},i})=f(t)n^{t-1}\cdot p^{\binom{t-1}{3}}$. 
Observe that by exploiting the choice of $p$ and $n$ in~\eqref{eq:probability} we obtain 
\begin{equation}\label{eq:lambda}
 \lambda= k^{-kt^2} n^{t-1} C^{\binom{t-1}{3}} n^{-t+3}= k^{-kt^2} k^{50k(t-1)(t-2)(t-3)/(3t)} n^2. 
\end{equation}
 
Let 
\[
 \overline{\Delta}_i:=\sum_{\substack{F,F'\in \cF\\ 
\binom{V(F)}{3}\cap \binom{V(F')}{3}\neq\emptyset}}  
\Exp(X_{F,i}X_{F',i}).
\]
Next we estimate $\overline{\Delta}_i$ as follows (since each $X_{F,i}$ counts a copy of the 
complete $3$-uniform hypergraph on the vertex set $V(F)$, we can classify pairs of these copies according to the number $s$ of common vertices): 
\begin{align*}
\overline{\Delta}_i \le |\cF|\sum_{s=3}^{t-1} \binom{t-1}{s}n^{t-1-s}p^{2\binom{t-1}{3}-\binom{s}{3}} 
\le  f(t)\cdot n^{2t-2}p^{2\binom{t-1}{3}} 2^t \sum_{s=3}^{t-1} n^{-s}p^{-\binom{s}{3}},
\end{align*}
and thus exactly as in the previous claim, Claim~\ref{cl:few_bad_edges}, we estimate the sum by 
$t\left(n^{-3}p^{-1}+n^{-t+1}p^{-\binom{t-1}{3}}\right)$, which leads to the upper bound
\begin{multline}\label{eq:Delta}
 \overline{\Delta}_i\le t 2^t \lambda \left(n^{t-1} p^{\binom{t-1}{3}}n^{-3}p^{-1}+n^{t-1} p^{\binom{t-1}{3}} n^{-t+1}p^{-\binom{t-1}{3}}\right)=\\
 t 2^t \lambda \left(C^{\binom{t-1}{3}}(pn)^{-1}+1\right) 
\overset{\eqref{eq:probability}}{=}2^{t+\log_2 t} \lambda \left(k^{\frac{100k}{t}\left[\binom{t-1}{3}-1\right]}k^{-10kt^4+\frac{60 kt^4}{(t-1)(t-2)}}+1\right)\le 2^{2t} \lambda.
\end{multline}

Now with Janson's inequality (see e.g.\ Theorem~2.14 in~\cite{JLR}) we obtain
\begin{multline*}
\PP(X_{{\cF},i}\leq 0.5\lambda) \leq \exp(-\lambda^2/(8\overline{\Delta}_i))
\overset{\eqref{eq:Delta}}{\le} \exp(-2^{-2t-3}\lambda)\\
\overset{\eqref{eq:lambda}}{\le} \exp(-2^{-2t-3} k^{-kt^2+50k(t-1)(t-2)(t-3)/(3t)} n^2)\le \\
\exp(-2^{-2t-3} k^{-kt^2+50kt^2/32} n^2)\le \exp(-k^{-2t-3+9t^2/8} n^2)\le \exp(-k^{7} n^2).
\end{multline*}
This tells us that for the color $x$ 
with probability at least $1-k\exp(-k^7 n^2)$
all graphs $H_i'$, $i\in[k]$, contain at least $0.5\cdot f(t)\cdot n^{t-1}p^{\binom{t-1}{3}}$ 
copies $F$ of $K_{t-1}$ in color $x$ and with  $\binom{V(F)}{3}\subseteq E(H_i')$. 
Since there are $k^{\binom{n}{2}}$ different colorings of $E(K_n)$, we may 
apply the union bound to see that 
the probability that there is a coloring $\psi\colon E(K_n)\to \{\red,\blue\}$ 
not satisfying  the claim
is at most $k^{\binom{n}{2}}\cdot k\exp(-k^7 n^2)<1/3$.
\end{proof}

With positive probability the Claims~\ref{cl:few_bad_edges} and~\ref{cl:propercopies} hold. 
So fix $H_1'$, \ldots, $H_k'$ that satisfy the assertions of these claims. 
Recall that $H_i=H'_i\setminus E'_i$ and we only need to verify~\ref{cond:Ramsey} as  $H_1$,\ldots, $H_k$
 obviously  satisfy~\ref{cond:free}. 
 Let $\psi\colon E(K_n)\to [k]$ be an arbitrary coloring. Claim~\ref{cl:propercopies} 
asserts that there is a color $x$ 
such that for every $i\in[k]$,
there are at least $0.5\cdot f(t)\cdot n^{t-1}p^{\binom{t-1}{3}}$ monochromatic copies  $F$ of  
$K_{t-1}$  in color $x$ and such that 
$\binom{V(F)}{3}\subseteq E(H_i')$. 
By Claim~\ref{cl:few_bad_edges}, for each $i\in[k]$, at most $0.2\cdot f(t)\cdot n^{t-1}p^{\binom{t-1}{3}}$ 
of these copies satisfy $\binom{V(F)}{3}\not\subseteq E(H_i)$, and thus condition~\ref{cond:Ramsey} 
is satisfied.
\end{proof}

\subsection{Proof of Theorem~\ref{thm:mindegree}}
\noindent\emph{A lower bound on $s_{k,1}(\Km{t})$.}
The proof of the  lower bound  is easy. In fact, it follows from the  bound on  
the Ramsey number $r_k(K_t)\ge k^{(1+o(1))t/2}$ and is as follows. 
Take a minimal $k$-Ramsey hypergraph $\cH$ for $\Km{t}$ such that 
$\delta(\cH)=s_{k,1}(\Km{t})$ and let $v\in V(\cH)$ be a vertex of minimum degree. 
By minimality of $\cH$, we have $\cH\setminus\{v\}\not\longrightarrow (\Km{t})_k$ and fix an 
edge coloring $\phi$ that certifies this. Since $\cH\longrightarrow (\Km{t})_k$ it follows that 
the link graph $\link_\cH(v)$ is Ramsey: $\link_\cH(v)\longrightarrow (K_{t-1})_k$. Therefore: 
$s_{k,1}(\Km{t})=\deg(v)\ge \hat{r}_k(K_{t-1})=\binom{r_k(K_{t-1})}{2}\ge k^{(1+o(1))t}$, where 
$\hat{r}_k(K_{\ell})$ is the \emph{size-Ramsey number} for $K_\ell$ and 
it was shown by Erd\H{o}s, Faudree, Rousseau and Schelp~\cite{EFRS78} that $\hat{r}_k(K_\ell)=\binom{r_k(K_\ell)}{2}$.

\noindent\emph{An upper bound on $s_{k,1}(\Km{t})$.}
Let $H$ be the $3$-uniform hypergraph as asserted by Lemma~\ref{lem:goodgadget} along 
with the hypergraphs $H_1$, \ldots,  $H_k$ that satisfy the conditions~\ref{cond:free} and~\ref{cond:Ramsey}. 
We fix the following $\Km{t}$-free $k$-coloring $c$ of $E(H)$: we color all edges from $H_i$ with color $i\in[k]$.  
Let further $\cH'$ be the hypergraph as 
guaranteed by Theorem~\ref{thm:BEL_gadget} for given $H$ and $c$. We define 
the hypergraph $\cH$ by adding to $\cH'$ a new vertex $v$ whose link is 
$\link_\cH(v):=\binom{V(H)}{2}$.  So $\deg_\cH(v)=\binom{n}{2}< k^{20kt^4}$
 as asserted by Lemma~\ref{lem:goodgadget}. 
In the following we argue that
$\cH'\not\longrightarrow (\Km{t})_k$ but $\cH\longrightarrow (\Km{t})_k$. 
It then follows immediately that every Ramsey subhypergraph of $\cH$ (in particular minimal Ramsey subhypergraph of $\cH$) 
for $\Km{t}$ needs to contain the vertex $v$, whose degree is less than $k^{20kt^4}$. 
Thus, once these two properties are proven, the upper bound follows.

In fact, $\cH'\not\longrightarrow (K_t^{(3)})_k$ is asserted 
by Theorem~\ref{thm:BEL_gadget}. So, we only need to focus on showing that
$\cH\longrightarrow (K_t^{(3)})_k$. For contradiction, suppose that 
there is a coloring $\phi\colon E(\cH)\to [k]$ without
monochromatic copies of $K_t^{(3)}$. We then know by the Property~\eqref{BEL:pattern} 
from Theorem~\ref{thm:BEL_gadget} that $E(H_1)$, \ldots,  $E(H_k)$ 
are all colored monochromatically, but in different colors. 
W.l.o.g.\ we may assume that, for each $i\in[k]$, $H_i$ is colored with the color $i$.  
Now, we define a coloring $\psi\colon \binom{V(H)}{2}\to [k]$ 
with $\psi(\{u_1,u_2\})=\phi(\{u_1,u_2,v\})$. 
Then, according to Lemma~\ref{lem:goodgadget} there is a color $x$ 
and the sets $S_1$, \ldots, $S_k\in\binom{V(H)}{t-1}$ such that 
$\binom{S_1}{2}$, \ldots, $\binom{S_k}{k}$ are monochromatic under $\psi$ in color $x$, 
 while for every $i\in[k]$ we have that  $H[S_i]\cong \Km{t-1}$ is colored $i$.  
 But this implies immediately that
 we found a monochromatic clique
 $\cH[S_x\cup\{v\}]\cong\Km{t}$ in color $x$. A contradiction.
\hfill $\Box$

\section{\texorpdfstring{Minimum codegrees of minimal Ramsey $3$-uniform hypergraphs}{Minimum codegrees of minimal Ramsey 3-uniform hypergraphs}}\label{sec:codeg_graphs}
 In this section we prove Theorem~\ref{thm:main_result_codeg} by showing that 
 $s_{2,2}(K_t^{(3)})=0$ and that  $s'_{2,2}(K_t^{(3)})=(t-2)^2$. 
Our proof strategy is similar to that of~\cite{BEL76,FGLPS14}: for the lower bound we rather provide an adhoc argument, while 
for the upper bound we employ the BEL-gadgets, Theorem~\ref{thm:BEL_gadget}, combined 
with a natural construction that we ``plant'' via a BEL-gadget (which is an almost Ramsey hypergraph).

\begin{proof}[Proof of Theorem~\ref{thm:main_result_codeg}]\mbox{}\\
\noindent\emph{Lower bound argument for $s'_{2,2}$.}
We first prove that $s'_{2,2}(\Km{t})\geq (t-2)^2$. Take a minimal 
$2$-Ramsey hypergraph $H$ for $\Km{t}$. Fix any two vertices $u$ and $v\in V(H)$ with $\deg_H(u,v)>0$. 
 We aim to show that $\deg_H(u,v)\geq (t-2)^2$. So, assume the opposite, i.e.\ $\deg_H(u,v)<(t-2)^2$. 

Let $H'$ be the subhypergraph obtained
from $H$ by deleting all edges containing both vertices $u$ and $v$. Since $H$ is Ramsey-minimal, 
$H'\not\rightarrow \left(\Ktt\right)_2$. Thus, there is a coloring $c$ with red and blue
of $E(H')$ which does not create a monochromatic copy of $\Km{t}$.
Define $N(u,v):=\{w\in V(H):\ \{u,v,w\}\in E(H)\}$, thus  $\deg_H(u,v)=|N(u,v)|$.
Take a longest sequence $B_1$,\ldots,$B_k$  of vertex disjoint sets of size $t-2$ in $N(u,v)$, 
such that both $B_i\cup \{u\}$ and $B_i\cup \{v\}$ span only blue edges under the coloring $c$ in $H$.
By assumption on the codegree $\deg_H(u,v)$, we know that $k<t-2.$ 

Next we can extend the coloring $c$
as follows. For each edge $e=\{u,v,w\}\in E(H)$ with $w\in\bigcup B_i$ we set $c(e)=\red$, while 
for all other edges $e=\{u,v,w\}\in E(H)$ we set $c(e)=\blue$. We claim that under this coloring
 there is no monochromatic copy of $K_t^{(3)}$ in $H$. Indeed, if there were a monochromatic
subgraph $F$ isomorphic to $\Km{t}$, then necessarily $u,v\in V(F)$ (since $E(H')$ were colored without 
monochromatic $\Km{t}$). If $F$ is red, then by construction $F$ can have at most one
vertex from each of the sets $B_i$ and no vertex from $N(u,v)\setminus \bigcup B_i$,
so $|V(F)|<t$, a contradiction. If $F$ is blue, then it cannot contain vertices from $\bigcup B_i$,
and therefore $V(F)\subseteq (N(u,v)\setminus \bigcup B_i)\cup \{u,v\}$. 
But then, we could extend the sequence of $B_i$s by the set $V(F)\setminus \{u,v\}$,
in contradiction to its maximality.
So, under the assumption $\deg_H(u,v)<(t-2)^2$ we conclude that 
$H\not\rightarrow (K_t^{(3)})_2$, a contradiction. Thus, we need to have
$\deg_H(u,v)\geq (t-2)^2$ for every $u,v\in V$ with $\deg_H(u,v)>0$. Therefore, $s'_{2,2}(\Km{t})\ge (t-2)^2$.

\noindent\emph{Upper bound argument for $s'_{2,2}$.}
First we provide a hypergraph $H$ with a prescribed coloring of
$E(H)$ without a monochromatic $\Km{t}$. 
We set $V(H):=[(t-2)^2]\cup\{a,b\}$ and we further  partition the vertices of
 $[(t-2)^2]$  into $(t-2)$ equal-sized sets $V_1$,\ldots, $V_{t-2}$. Next we 
 choose the edges for 
$H$ as follows:
\begin{equation}\label{eq:edges_H}
 \begin{split}
E(H):=&\bigcup_{i}^{t-2}\binom{V_i}{3}\cup\left\{e\cup\{w\}\colon e\in\binom{V_i}{2}\text{ for some }i\in[t-2], w\in\{a,b\}\right\} \\ 
&\cup\left\{f\colon f\in\binom{[(t-2)^2]}{3}, |f\cap V_i|\le 1\,\, \forall i\in[t-2]\right\}\\ 
&\cup
\left\{e\cup\{w\}\colon e\in\binom{[(t-2)^2]}{2}, |e\cap V_i|\le 1\,\, \forall i\in[t-2], w\in\{a,b\} \right\}.   
 \end{split}
\end{equation}
Thus, $H$ is obtained from the clique $\Km{(t-2)^2+2}$ on the vertex set $\bigcup V_i\cup\{a,b\}$, where 
we delete all edges that contain both $a$ and $b$ and moreover we delete all edges that cross exactly two different $V_i$s
and contain neither $a$ nor $b$.
Next we provide  a red-blue-coloring $c$ of  the edges of $H$ as follows: 
the edges  contained in $V_i\cup\{a\}$ and in $V_i\cup\{b\}$ for $i\in[t-2]$ are 
colored \emph{blue}, while the other edges of $H$ are colored \emph{red} -- thus the edges in the first line 
of~\eqref{eq:edges_H} are colored blue, while the edges defined in the second and third line of~\eqref{eq:edges_H} 
are colored red. It is immediate that such a coloring does not yield 
a monochromatic copy of $\Km{t}$. Indeed, a blue copy of $\Km{s}$ cannot use vertices from different
sets $V_i$ and, since $\deg_H(a,b)=0$, it also cannot contain both vertices $a,b$, which gives $s\leq t-1$.
Similarly, a red copy of $\Km{s}$ can use at most one vertex from each $V_i$ and, as $\deg_H(a,b)=0$, it also cannot contain
both vertices $a,b$, which again gives $s\leq t-1$.

Applying Theorem~\ref{thm:BEL_gadget} to the colored hypergraph $H$ for this coloring $c$, we obtain a $3$-uniform hypergraph $\cH$ which contains 
$H$ as an induced hypergraph, which is not 
$2$-Ramsey for $\Km{t}$ and such that  any red-blue $\Km{t}$-free coloring $\phi$ of $E(\cH)$ agrees on $E(H)$
with the coloring $c$ up to permutation of the two colors. Also, Theorem~\ref{thm:BEL_gadget} asserts that $\deg_\cH(a,b)=0$. 
Next we define $\cH'$ by adding to $\cH$ all $(t-2)^2$ edges $\{a,b,u\}$ where $u\in[(t-2)^2]$. 

Let us see why $\cH'\longrightarrow (\Km{t})_2$. Fix any coloring $\phi$ of $E(\cH')$ and assume that no copy of 
$\Km{t}$ is monochromatic in $\cH'$ under $\phi$. Since $\cH\subset \cH'$, it follows that the color pattern  $c$ 
as described above (up to permutation) is enforced in $H$. Assume w.l.o.g.\ that 
$E(H)$ is colored according to $c$. Then if there is a set $V_i$ such that all 
edges $\{v,a,b\}$ are colored blue for all $v\in V_i$ this would yield a blue copy of $\Km{t}$. 
So, assume that for every $V_i$ there is at least one edge  $\{v_i,a,b\}$ which is colored red for some $v_i\in V_i$. 
Then $\{a,b,v_1,\ldots,v_{t-2}\}$ forms a red clique $\Km{t}$. Thus, in any case, we find a monochromatic copy
of $\Km{t}$, i.e. $\cH\longrightarrow (\Km{t})_2$. 
Moreover, since $\cH$ is not $2$-Ramsey for $\Km{t}$, any minimal $2$-Ramsey subhypergraph of $\cH'$  
must contain edges that contain both $a$ and $b$. This shows $s'_{2,2}(\Km{t})\leq (t-2)^2$.

In fact, notice that by the previous discussion of the lower bound on $s'_{2,2}$,
any such minimal $2$-Ramsey subhypergraph of $\cH'$  
must contain all the $(t-2)^2$ edges that contain both $a$ and $b$. This will be important in the following proof.

\noindent\emph{Showing $s_{2,2}(\Km{t})=0$.} This looks surprising at the first sight since taking $\Km{n}$ with 
$n=r_2(\Km{t})$ and then deleting all edges that contain two distinguished vertices gives a non-Ramsey 
hypergraph (which suggests  $s_{2,2}(\Km{t})>0$). However this is not the case and it 
will follow from the above construction of the hypergraph $\cH'$.

 As argued above, \emph{any} 
minimal Ramsey subhypergraph  of $\cH'$ for $\Km{t}$ has to contain \emph{all} 
$(t-2)^2$ edges that contain $a$ and $b$. Thus, any such minimal hypergraph $\cH''$ contains
all vertices of $H$. Next we argue that $\cH''[V(H)]\not\longrightarrow (\Km{t})_2$. Indeed, by construction 
of $\cH'$, we observe that $\cH'[V(H)]\supseteq \cH''[V(H)]$ contains exactly 
$(t-2)+(t-2)^{t-2}$ copies of $\Km{t}$, namely exactly $(t-2)$ ones 
that are induced on $V_i\cup\{a,b\}$ for some $i\in [t-2]$, and $(t-2)^{t-2}$ ones that contain one vertex from each of the $V_i$s and additionally $a$ and $b$. There are no further copies of $\Km{t}$ since $H[\bigcup V_i]$ contains only copies of $\Km{t-2}$ 
which either cross all $V_i$s or are equal to some $H[V_i]$. It is now easy to see that $\cH'[V(H)]\not\longrightarrow (\Km{t})_2$ as follows. We can color the edges of $\cH''[V(H)]$ uniformly at random with colors red and blue. Then, the expected number of 
monochromatic copies of $\Km{t}$ is $[(t-2)+(t-2)^{t-2}]\cdot 2^{1-\binom{t}{3}}<1$, as $t\geq 4$,
i.e. there exists a 2-coloring which avoids monochromatic copies of $\Km{t}$.

Thus, $\cH''$ has to contain at least one further vertex $x\not\in V(H)$. Then, since $|V(H)|=(t-2)^2+2\ge 6$, it follows 
by Property~\eqref{cond:zerocodegree} of Theorem~\ref{thm:BEL_gadget} that 
there exists a vertex $y\in V(H)$ such that $0=\deg_{\cH'}(x,y)\ge \deg_{\cH''}(x,y)$. Therefore, $s_{2,2}(\Km{t})=0$.
\end{proof}

\section{concluding remarks}
In this paper we studied the smallest minimum degree and codegree of minimal Ramsey $3$-uniform 
hypergraphs for complete hypergraphs $\Ktt$, $t\ge 4$. In particular we showed that the smallest  minimum degree 
 $s_{2,1}(\Ktt)$ of minimal $2$-Ramsey $3$-uniform hypergraph lies between $2^t$ and $2^{40 t^4}$. It would be interesting to determine 
the right order of the exponent. We leave the study of minimal Ramsey $r$-uniform hypergraphs for $r\ge 4$  for future work.

\bibliographystyle{amsplain}
\bibliography{references}

\end{document}